\newtheorem{theorem}{Theorem}[section]
\newtheorem{proposition}[theorem]{Proposition}
\newtheorem*{theorem*}{Theorem}
\newtheorem{corollary}[theorem]{Corollary}
\newtheorem{lemma}[theorem]{Lemma}
\newtheorem{remark}[theorem]{Remark}
\theoremstyle{remark}
\newtheorem{example}[theorem]{Example} 
\newtheorem{conjecture}[theorem]{Conjecture}
\numberwithin{equation}{subsection}
\newcommand{\bt}{\begin{theorem}}
\newcommand{\et}{\end{theorem}}
\newcommand{\bts}{\begin{theorem*}}
\newcommand{\ets}{\end{theorem*}}
\newcommand{\bco}{\begin{corollary}}
\newcommand{\eco}{\end{corollary}}
\newcommand{\bd}{\begin{definition}}
\newcommand{\ed}{\end{definition}}
\newcommand{\bp}{\begin{problem}}
\newcommand{\ep}{\end{problem}}
\newcommand{\bl}{\begin{lemma}}
\newcommand{\el}{\end{lemma}}
\newcommand{\bprop}{\begin{proposition}}
\newcommand{\eprop}{\end{proposition}}
\newcommand{\br}{\begin{remark}}
\newcommand{\er}{\end{remark}}
\newcommand{\bpf}{\begin{proof}}
\newcommand{\epf}{\end{proof}}
\newcommand{\bex}{\begin{example}}
\newcommand{\eex}{\end{example}}
\newcommand{\C}{\mathbb{C}}
\newcommand{\gal}{\mathscr{G}}
\newcommand{\galsub}{\mathscr{H}}
\renewcommand{\a}{\alpha}
\newcommand{\s}{\sigma}
\title{A classification of first order differential equations}
\author{
	Partha Kumbhakar, 
	Ursashi Roy,
	Varadharaj R. Srinivasan}
\address{Indian Institute of Science Education and Research Mohali, India.}
\email{\tiny{ parthakumbhakar@iisermohali.ac.in; ursashiroy@iisermohali.ac.in; ravisri@iisermohali.ac.in}}
\begin{document}
\maketitle
\begin{abstract} 
Let $k$ be a differential field of characteristic zero with  an algebraically closed field of constants. In this article, we provide a classification of first order differential equations over $k$ and study the  algebraic dependence of solutions of a given first order differential equation. Our results generalize parts of the work of Noordman et al. \cite{TvM22} and complements the work of Freitag et al. \cite{FJM22}.   \end{abstract}
\emergencystretch 3em
\section{Introduction}\label{intro} Let $k$ be a differential field of characteristic zero having an algebraically closed field of constants $C$. Let $f\in k[Y,Z]$ be an  irreducible polynomial involving the variable $Z.$ By a \emph{solution} of the differential equation $f(y,y')=0,$ we mean an element $u$ in a differential field extension  of $k$ having $C$ as its field of constants such that $f(u,u')=0.$ If $f$ is defined over the field of constants $C$ then $f(y,y')=0$ is called an \emph{autonomous first order differential equation}. The field of fractions $k(f)$ of the ring $k[Y,Z]/\langle f\rangle$ has a natural derivation induced by $f$  that maps $y$ to $z$ where $y,z$ are the images of $Y,Z$ in $k[Y,Z]/\langle f\rangle$ under the canonical projection.   It then follows that $f(y,y')=0$ has  a nonalgebraic solution if and only if the differential field $k(f)$ has $C$ as its field of constants.   

In this article, we study transcendence degree $1$ differential subfields of iterated strongly normal extensions. As a consequence of our study, we obtain a classification of first order differential equations $f(y,y')=0$ over $k$ into the following types:
\begin{enumerate} [(I)]
	\item\label{algebraictype} (\emph{algebraic type}) All solutions of $f(y,y')=0$ are algebraic over $k$ or equivalently, $k(f)$ with the induced derivation has a nonalgebraic constant.\\
	
	\item \label{ricattitype} (\emph{Riccati type}) There are a nonalgebraic solution of $f(y,y')=0,$ a finite algebraic extension $\tilde{k}$ of $k$ and an element $t\in \tilde{k}(f)$ such that $\tilde{k}(f)$ is a finite algebraic extension of $\tilde{k}(t)$ and that $t$ is a solution of a  Riccati differential equation: \begin{equation} \label{RDE}t'=a_2t^2+a_1t+a_0, \ \text{with}\ \ a_0, a_1, a_2\in \tilde{k}, \ \text{not all zero}.\end{equation}
	
	\item\label{weierstrasstype} (\emph{Weierstrass type}) There are a nonalgebraic solution of $f(y,y')=0,$ a finite algebraic extension $\tilde{k}$ of $k$ and an element $t\in \tilde{k}(f)$ such that $\tilde{k}(f)$ is a finite algebraic extension of $\tilde{k}(t,t')$ and that $t$ is a solution of a Weierstrass differential equation:  \begin{equation}  \label{WDE} (t')^2=\alpha^2(4t^3-g_2t-g_3), \  g_2, g_3 \in C, \alpha\in \tilde{k}\ \text{and}\   27g^2_3-g^3_2\neq 0.\end{equation} 
	
	\item\label{generaltype}  (\emph{general type}) \label{gentype} The differential equation $f(y,y')=0$  is not of the above types.  
	\end{enumerate}
 When $k=C,$ we obtain the  classification of autonomous differential equation due to Noordman et al.  \cite[page 1655]{TvM22}. If a differential equation is of Riccati or Weierstrass type then it is easy to see that $k(f)$ can be embedded in an iterated strongly normal extension. The following theorem contains the main results (Theorem \ref{trdonesubfields-SNE}, Theorem \ref{maintheorem} and  Corollary \ref{SNEoverC}) of this article.

\bt \label{intromaintheorem} Let $k$ be a differential field with an algebraically closed field of constants $C$ and $f(y,y')=0$ be a differential equation over $k.$ Suppose that $f(y,y')=0$ has a nonalgebraic solution  in an iterated strongly normal extension $E$ of $k$ or equivalently, the differential field $k(f)$ can be embedded in an iterated strongly normal extension $E$ of $k.$ Then the following statements hold. \begin{enumerate}[(i)] 
\item \label{RW-ISNE} The differential equation  $f(y,y')=0$ is of either Riccati or Weierstrass type. \\

\item \label{autonomoustype} If $k=C$ and $f(y,y')=0$ is of Riccati type  then there is an element $t\in C(f)$ such that
\begin{equation} \text{either}\  t'=1\ \ \text{or}\ \ t'=ct\ \ \ \text{for some nonzero}\  c\in C.  \end{equation}

\item \label{RW-SNE} If $E$ is a strongly normal extension of $k$ and  $f(y,y')=0$ is of Riccati type (respectively, Weierstrass type) then the finite algebraic extension $\tilde{k}$ and the element $t\in \tilde{k}(f),$ as in the definition of a Riccati type (respectively, Weierstrass type), can be chosen so that   $\tilde{k}(f)=\tilde{k}(t)$ (respectively, $\tilde{k}(f)=\tilde{k}(t,t')$). 
\end{enumerate}
\et

We use the above theorem to produce a family of geometrically irreducible plane algebraic curves $f$ over $k$ such that $f(y,y')=0$ is of general type (see Theorem \ref{generaltypegeneration}).  This family of examples of general type equations includes Abel differential equations (see Example \ref{Abel-DE}) of the form \begin{equation*}y'=a_ny^n+\cdots+a_2y^2,\end{equation*} 
 where $n\geq 3,$ $a_i\in k$ and both $a_2$ and $a_3$ have no antiderivatives in $k.$ Note that the differential equation $y'=y^3-y^2,$ that appears in \cite{Ros74} and \cite{TvM22}, belongs to the above family of Abel differential equations and therefore is of general type. We also prove the following results concerning algebraic dependence of nonalgebraic solutions (see Theorem \ref{algebraicdependence-generaltype}):

	\begin{enumerate} [(i)]
		
		\item If a first order differential equation over $k$ has a nonalgebraic solution in an iterated strongly normal extension of $k$ then any four nonalgebraic solutions from a differential field extension of $k$ having $C$ as its field of constants  are  $k-$algebraically dependent.\\
		
		\item If an autonomous differential equation has a nonalgebraic solution in an iterated strongly normal extension of $C$  then any two nonalgebraic solutions from a differential field  having $C$ as its field of constants are $C-$algebraically dependent.
		\end{enumerate}

In a recent article \cite{FJM22}, Freitag et al. study the algebraic dependence of solutions of differential equations of any order using model theoretic methods.  They show that if a first order differential equation (respectively, an autonomous differential equation) has four (respectively, two) algebraically independent solutions then any $m$ distinct nonalgebraic solutions are algebraically independent.  Thus, it is natural to make the following

\begin{conjecture} \label{conjecture}A first order differential equation over $k$ (respectively, An autonomous differential equation over $C$) is not of general type if and only if it has at most three (respectively, at most one) algebraically independent solutions in any given differential field extension of $k$ (respectively, $C$) having $C$ as its field of constants. 
	\end{conjecture}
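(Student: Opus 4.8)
Conjecture~\ref{conjecture} is an equivalence, and I would prove its two directions by very different means. The forward implication -- not of general type $\Rightarrow$ at most three (resp.\ one) algebraically independent solutions -- is essentially already in hand. For algebraic type there is nothing to prove, as there is no nonalgebraic solution. For Riccati type, with $\tilde k$ and $t$ as in its definition, I would use that the logarithmic derivative of the cross ratio of any four solutions of~\eqref{RDE} vanishes -- a one line computation -- so that the cross ratio lies in $C$; since $\tilde k(f)$ is finite over $\tilde k(t)$, every nonalgebraic solution of $f(y,y')=0$ is then interalgebraic over $\tilde k$ with a solution of~\eqref{RDE}, and any four of them are algebraically dependent over $\tilde k$, hence over $k$. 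For Weierstrass type, with $\mathcal E$ the elliptic curve $v^2=4w^3-g_2w-g_3$ over $C$, I would use that the logarithmic derivative of the algebraic group $\mathcal E$ -- a homomorphism $\mathcal E(F)\to(F,+)$ with kernel $\mathcal E(C)$ -- sends the point $(t,t'/\alpha)\in\mathcal E$ attached to a solution $t$ of~\eqref{WDE} to $\alpha$, so that any two solutions give points of $\mathcal E$ differing by a constant point; as $\tilde k(f)$ is finite over $\tilde k(t,t')$, any two solutions of $f(y,y')=0$ are then algebraically dependent over $\tilde k$. This is precisely Theorem~\ref{algebraicdependence-generaltype} (the algebraic case being trivial), and in the autonomous case part~\ref{autonomoustype} of Theorem~\ref{intromaintheorem} replaces the Riccati step and gives the sharper bound $1$.

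For the converse I would argue the contrapositive: if $f(y,y')=0$ is of general type, I would produce a differential field extension of $k$ (resp.\ $C$) with field of constants $C$ carrying four (resp.\ two) algebraically independent solutions. Since $f$ is not of algebraic type, $k(f)$ has $C$ as its constants, so the generic type $p$ of $k(f)$ over $k$ has $U$-rank one in $\mathrm{DCF}_0$. I would invoke the Zilber dichotomy for $\mathrm{DCF}_0$ (Hrushovski and Sokolovi\'c): either $p$ is non-orthogonal to the constants, or $p$ is modular (trivial, or non-trivial locally modular). In the first case the non-orthogonality together with $U$-rank one makes $p$ almost internal to the constants; by the correspondence between constants-internality and strongly normal extensions (Pillay), a finite tuple realizing this internality should generate, over a finite algebraic extension $\tilde k$ of $k$, a strongly normal extension $M$ of $\tilde k$ with $\tilde k(f)\subseteq\overline M$, so that $k(f)$ embeds in an iterated strongly normal extension of $k$; by part~\ref{RW-ISNE} of Theorem~\ref{intromaintheorem} this would force $f$ to be of Riccati or Weierstrass type, contrary to hypothesis. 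Hence $p$ is modular. I would then take a Morley sequence $a_1,a_2,a_3,a_4$ of $p$ over $k$ inside the differential closure $k^{\mathrm{diff}}$ of $k$, whose field of constants is $C$ because $C$ is algebraically closed: the $a_i$ are solutions of $f(y,y')=0$, and since each has transcendence degree one over $k$ and $k\langle a_1,\dots,a_i\rangle$ is algebraic over $k(a_1,\dots,a_i)$, the Morley sequence is algebraically independent over $k$. The autonomous case is identical over $C$, using Corollary~\ref{SNEoverC} and a Morley sequence of length two.

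The main obstacle is the converse, and specifically carrying it out without the Zilber dichotomy: the machinery of this paper is built to analyse strongly normal extensions, not to recognise their absence. Two points need care. The lesser one is the ``non-orthogonal to the constants'' case, where one must ensure that the internalising strongly normal extension lives over a finite \emph{algebraic} extension of $k$ rather than some transcendental one, so that part~\ref{RW-ISNE} of Theorem~\ref{intromaintheorem} applies; this ought to work because the possible binding groups -- $\mathbb G_a$, $\mathbb G_m$, a constant elliptic curve, or the $\mathrm{PGL}_2$ behind~\eqref{RDE} -- are all defined over $C\subseteq k$, but making it precise is also what reconciles the leftover modular types, in particular those coming from Manin kernels of non-isotrivial elliptic curves, with the classification: those are of general type and yet carry arbitrarily many algebraically independent solutions. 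The serious obstacle is that a self-contained proof is forced to establish a rank-one structure theorem directly -- a transcendence-degree-one differential field extension of $k$ with constants $C$ that is not built from one-dimensional algebraic groups over $C$ has completely disintegrated generic behaviour, hence admits arbitrarily many algebraically independent solutions. Modulo the forward implication above this is, in effect, all the conjecture asserts, and I expect this rank-one dichotomy, rephrased within differential Galois theory, to be exactly where genuinely new work is required.
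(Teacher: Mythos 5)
The statement you are addressing is left as a conjecture in the paper: the paper proves only the forward implication (that is Theorem \ref{algebraicdependence-generaltype}), and verifies the full equivalence just for rational autonomous equations $y'=h(y)$, by a direct power-series/partial-fraction computation in Section \ref{rationalautoconjecture}. Your forward direction is essentially the paper's argument in light disguise: your constancy-of-cross-ratio step for \eqref{RDE} replaces the paper's bound $\mathrm{tr.deg}\le \dim \mathrm{PGL}(2,C)=3$ on the field generated by all solutions of a fixed Riccati equation, and your elliptic logarithmic-derivative step replaces the paper's appeal to Kolchin's addition lemma; together with the interalgebraicity coming from $\tilde{k}(f)$ being finite over $\tilde{k}(t)$ (the paper's embeddings $\psi_i$), this part is sound and matches the paper.

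The genuine gap is the converse, and while you have located it honestly, your sketch does not close it, nor does the paper. Two concrete problems. First, in the non-orthogonality case, almost internality to the constants of the rank-one type $p$ is a priori witnessed only over a base $B\supseteq k$ independent from the solution, and $B$ may be transcendental over $k$; what you then obtain is an embedding into a strongly normal extension of $k\langle B\rangle$, to which Theorem \ref{intromaintheorem}(\ref{RW-ISNE}) does not apply, since that theorem concerns iterated strongly normal extensions of $k$ itself. The paper's descent machinery (Lemma \ref{descent-lemma}, Theorem \ref{maintheorem}) descends only along the tower of an iterated strongly normal extension, not along an arbitrary transcendental base change, so bridging this requires a Rosenlicht/Matsuda or Hrushovski--Itai type input that is neither in the paper nor supplied by you. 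Second, your use of $k^{\mathrm{diff}}$ conflicts with your use of a Morley sequence: the differential closure does have constants $C$, but it realizes only isolated types, and there is no reason the nonforking extensions of $p$ over $k\langle a_1,\dots,a_{i-1}\rangle$ are isolated; in a saturated model the sequence exists, but then you must rule out new constants in $k\langle a_1,\dots,a_4\rangle$, and that is exactly what orthogonality of $p$ to the constants gives --- modularity is irrelevant, since any Morley sequence of a nonalgebraic rank-one type is automatically algebraically independent. So your converse reduces to the assertion that the generic type of a general-type equation is orthogonal to the constants (after finite algebraic base change), which is precisely the open content of the conjecture; the paper establishes it only for rational autonomous equations, by an elementary computation rather than by the dichotomy theorem.
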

We provide some evidence for this conjecture in Section \ref{rationalautoconjecture}. We will not prove any results concerning algebraic type differential equations. The articles  \cite{Mv07}, \cite{NNvT15}  and \cite{TvM22} are directly related to our work. We shall now explain in detail their connections to our main theorem.

\subsection{Classification of autonomous differential equations \cite{TvM22}.}  
Let $f\in C[Y,Z]$ be irreducible and $C(f)$ be the function field of the curve defined by $f.$   Let $\Gamma$ be the  projective nonsingular model whose function field $C(\Gamma)$ is isomorphic to $C(f).$  Since $C(f)$ is a differential field with the derivation induced by $f,$  we could also regard $C(\Gamma)$ as a differential field with the induced derivation $': C(\Gamma)\to C(\Gamma)$ .
	
\begin{minipage}{.75\textwidth} Let  $Der_C(C(\Gamma))$ be the $C(\Gamma)-$module of $C-$derivations on $C(\Gamma).$ Then, there is a natural $C(\Gamma)-$module isomorphism between $Der_C(C(\Gamma))$ and   $Hom_{C(\Gamma)}\left(\Omega^1_{ C(\Gamma)|C}, C(\Gamma)\right)$ 
given by $(a\mapsto a')\mapsto \omega,$ where $\omega$ is the differential such that $\Phi(\omega)=1.$ 
\end{minipage}
\begin{minipage}{.25\textwidth}
	\[
\begin{tikzcd} C(\Gamma) \arrow{r}{\mathrm d} \arrow{dr}[anchor=center, rotate=-32, yshift=1ex]{a\mapsto a'}&  \Omega^1_{C(\Gamma)|C}\arrow{d}{\Phi}\\  
	& C(\Gamma)
\end{tikzcd}
\]\end{minipage}

 Thus, to an autonomous differential equation, one can associate the tuple $(\Gamma, \omega).$
Using this identification, in \cite{TvM22}, autonomous equations are classified into four types:
\begin{enumerate}
	\item  \label{exacttype} $\omega=\mathrm dg$ for some $g\in C(\Gamma)$ or equivalently, there is an element $t	\in C(\Gamma)$ such that $t'=1;$ in this case the equations are of \emph{exact type}. \\
	\item \label{exponentialtyoe}  $\omega=\mathrm dg/cg$ for some $g\in C(\Gamma)$  and nonzero constant $c$ or equivalently, there is a $t\in C(\Gamma)$ such that $t'=ct$ for some nonzero constant $c;$ in this case the equations are of \emph{exponential type}. \\
\item \label{weierstrassiantype}$\omega= \mathrm dg/h,$ for $h,g\in C(\Gamma)$ and $h^2=g^3+ag+b$ with $4a^3+27b^2 \neq0$ or equivalently,  there is an element $t\in C(\Gamma)$ such that $t'^2 =t^3 +at+b$ for $a,b \in C$ with $4a^3+27b^2 \neq0$;  in this case the equations are of \emph{Weierstrass type}.\\

	\item \label{generaltype} If the equation is not of the above three types then it is of \emph{general type}. 
\end{enumerate}

Since $C$ is an algebraically closed field of constants, every nonconstant solution must be nonalgebraic
and every autonomous differential equation has a nonalgebraic solution (see Section \ref{rationalautoconjecture}). Thus, from Theorem \ref{intromaintheorem} (\ref{autonomoustype}), we obtain that our classification of differential equations coincides with the one above.

In \cite{TvM22}, an autonomous equation $(\Gamma, \omega)$ is called \emph{new} if $(\Gamma, \omega)$ is not a proper pull back.  Equations of type general and new are shown to have the  following  interesting property \cite[Theorem 2.1]{TvM22}: Any number of distinct nonalgebraic solutions  are $C-$algebraically independent.
Using this property, the authors prove that nonalgebraic solutions of equations of general type  cannot be found in any iterated Picard-Vessiot  extension \cite[Proposition 7.1]{TvM22}. In fact, their arguments can be extended to show that equations of general type do not have nonalgebraic solutions in any iterated strongly normal extension as well.   Thus, when $k=C,$ our theorem can also be recovered from their work. We would also like to point out that the \emph{type} of a rational autonomous equation; $y'=f(y),$ where $f$ is a nonzero rational function in one variable over $C,$ can be completely classified using \cite[Proposition 3.1]{Sri17}; for details see Remark \ref{rational-autoclass}.

\subsection{Painlev\'{e} property and differential equations of Ricatti or Weierstrass type \cite{Mv07}  \&  \cite{NNvT15}:} Let $k$ be a finite algebraic extension of the ordinary differential field $\C(x)$ of rational functions over complex numbers with $x'=1.$ A differential equation $f(y,y')=0$ over $k$ is said to have the \emph{Painlev\'{e} property} if the set of all branch points and the set of all essential singularities of the solutions form a discrete set.   A differential equation over $k$ of algebraic type has the Painlev\'{e} property if and only if all of its solutions lie in a fixed finite algebraic extension of $k$ (see \cite[Theorem 2.7 and Example 2.9]{NNvT15}).   

Let $f(y,y')=0$ have a nonalgebraic solution $u.$  Then the function field $k(f)$ with the induced derivation is isomorphic to $k(u,u')$ as differential fields. Since $k(u,u')$ has $\C$  as its field of constants,  from \cite[Theorem 4.5]{Mv07}, we deduce that $f$  has the Painlev\'{e} property  if and only if there exists a finite algebraic extension $\tilde{k}$ of $k$ and an element $t\in \tilde{k}(u,u')$ such that $\tilde{k}(u,u')=\tilde{k}(t,t')$ and that $t$ is a solution of either a Riccati  or a Weierstrass differential  equation over $\tilde{k}.$ In the former case, $\tilde{k}(t,t')$ can be embedded in a Picard-Vessiot extension of $\tilde{k}$ whose Galois group is isomorphic to a closed subgroup of $\mathrm{GL}_2(\C)$ and in the latter case, $\tilde{k}(t,t')$ is an abelian extension of $\tilde{k}$ whose Galois group is isomorphic to the $\C-$points of a nonsingular elliptic curve defined over $\C$ (see \ref{algebraicdependenceofsolutions} for details). 
  
Now,  from our Theorem \ref{intromaintheorem}(\ref{RW-SNE}), we can make the following observation: \begin{itemize} \item[] Let $k$ be a finite algebraic extension  of the ordinary differential field $\C(x)$ of rational functions in one variable $x$ with the derivation $d/dx.$ Let $f(y,y')=0$ be a first order differential equation with a nonalgebraic solution.  Then, $f(y,y')=0$ has a nonalgebraic solution in a strongly normal extension of a finite algebraic extension of $k$  if and only if $f(y,y')=0$ has the Painlev\'{e} property.	\end{itemize}

There are differential equations of Ricatti type  that do not have the Painlev\'{e} property. For example,  the differential equation $$y'=-\frac{1}{2x}y^3$$  has $u_c=1/\sqrt{\ln(x)+c}$ as a one-parameter family of solutions, where the parameter $c\in \C$.  Note that  $u_c\in \C(x)(\ln(x))\left(\sqrt{\ln(x)+c}\right)$ and that   $$ \C(x)\subset \C(x)(\ln(x))\subset \C(x)(\ln(x))\left(\sqrt{\ln(x)+c}\right).$$ Clearly, $\C(x)(\ln(x))(\sqrt{\ln(x)+c})$  is an  iterated (Picard-Vessiot) strongly normal extension of $\C(x).$ Since $\ln(x)\in \C(x)(u_c)$ and $(\ln(x))'=1/x\in \C(x),$ we see that $y'=-\frac{1}{2x}y^3$ is of Ricatti type.  However, the set of all branched points of these solutions, namely the set $\C,$ is not  discrete.

\subsection*{Organisation of the paper.}  In Section \ref{DGT}, we gather facts from the theory of strongly normal extensions for easy reference. In Section \ref{OPMT}, we provide an outline of the proof of our main theorem and in Section \ref{liouvillian-Picard-Vessiot}, using a work of Andr\'{e} Yves on solution algebras, we prove a structure theorem for differential subfields of liouvillian extensions. The proof of Theorem \ref{intromaintheorem} can be found in  Sections \ref{trdone-SN} and \ref{trdone-ISNE}. In Section \ref{application}, we  prove results concerning the algebraic dependence of solutions of first order differential equations. We also show that Conjecture \ref{conjecture} holds for rational autonomous differential equations. 
\section{Differential Galois Theory}   \label{DGT}

Here, we shall set up notations and put together the necessary definitions and results from the theory of strongly normal extensions. 

\subsection{Strongly normal extensions \cite{Kol53}, \cite{Kol55}, \cite{Kov03}, \cite{Kov06}.} \label{TSNE}  A differential field extension $E$ of $k$ is called a \emph{strongly normal extension} if $E$ is  finitely differentially generated over $k$  and if every differential $k-$isomorphism $\sigma$ of $E$  satisfies the following conditions: $E\sigma E=EC(\sigma) = \sigma E C(\sigma)$, where $C(\sigma)$ is the field of constants of the compositum $E\sigma E$ and that $\s(c)=c$ for all constants $c$ of $E$.   It can be shown that every strongly normal extension of  $k$ is a finitely generated field extension of $k$ and has $C$ as its field of constants (\cite[Propositions 12.2, 12.4]{Kov03}). A differential field extension $E$ of $k$ is called an \emph {iterated strongly normal extension} if there is a tower of differential fields $$k=E_0\subseteq E_1\subseteq E_2\subseteq \cdots\subseteq E_n=E$$ such that $E_i$ is a strongly normal extension of $E_{i-1}$ for each $i=1,\dots, n.$ When the field of constants is algebraically closed, strongly normal extensions exist and are unique up to differential isomorphisms. 

The group of all $k-$differential automorphisms of  a differential field extension $E$ of $k$ is called the \emph{differential Galois group} of $E$ over $k$ and is denoted by $\mathscr{G}(E|k)$ or simply by $\gal,$ when there is no ambiguity. If the field of constants of $E$ is $C$ and for any $x\in E\setminus k$ there is $\s\in \gal(E|k)$ such that $\s(x)\neq x$ then $E$ is called a \emph{weakly normal extension} of $k.$

For a strongly normal extension $E$ over $k,$ the elements of $\mathscr{G}(E|k)$ can be canonically identified with the $C-$points of a reduced group scheme of finite type defined over $C$ \cite[Sections 33-36 ]{Kov03}.  Thus, $\mathscr G (E|k)$ can be thought of as an algebraic group\footnote{Since $C$ is of characteristic zero and algebraically closed, group schemes of finite type over $C$ are  smooth.} (though not necessarily affine) defined over the field of constants $C$. The fundamental theorem of strongly normal extension (\cite[Theorem 36.3]{Kov03}) provides a bijective correspondence between the intermediate differential subfields
and the Zariski closed subgroups (closed subschemes) of $\mathscr{G}(E|k)$. If $\mathscr{H}$ is a closed subgroup of $\mathscr{G}(E|k)$ and $K$ is an intermediate differential field then the bijective correspondence is given by the maps
\begin{align*}
	&K \to \mathscr{G}(E|K):=\left\lbrace \sigma \in \mathscr{G}(E|k)\ |\ \sigma(u)=u \  \ \text{for all}\  \ u  \in K \right\rbrace \\
	& \mathscr{H} \to E^{\mathscr{H}}:= \left\lbrace u  \ \in E\ |\ \sigma(u)=u \ \ \text{for all} \ \sigma  \in \mathscr{H} \right\rbrace. 
\end{align*}
In this correspondence, the field fixed by the group $\mathscr{G}(E|k)$ is the base field $k$, that is $E^{\mathscr{G}(E|k)}=k$. Let $K$ be a differential field intermediate to $E$ and $k$. Then $K$ is a strongly normal extension of $k$ if and only if $\mathscr{G}(E|K)$ is a closed normal subgroup of $\mathscr{G}(E|k)$;
in which case, the differential Galois group $\mathscr{G}(K|k)$ is isomorphic to the quotient group $\mathscr{G}(E|k)/\mathscr{G}(E|K)$. 
The algebraic closure of $k$ in $E$ is a finite Galois extension, which we denote by $E^0$. 

A strongly normal extension $E$ over $k$ has the following structure (\cite[Proposition 12.2]{Kov06}), which  will  play a crucial role in our proofs: \begin{equation} k\subseteq E^0\subseteq F\subseteq E, \label{SNE-resolution} \end{equation} 
where $E^0$ is the relative algebraic closure of $k$ in $E$ and that $E^0$ is also a finite Galois extension of $k,$ $F$ is an \emph{abelian extension} of $E^0,$ that is, $F$ is strongly normal over $E^0$  such that $\gal(F|E^0)$ is isomorphic to an abelian variety defined over $C,$ and $E$ is a strongly normal extension of $F$ such that $\gal(E|F)$ is a connected linear algebraic group defined over $C.$ Such an extension $E$ is  a Picard-Vessiot extension of $F$ (see \cite[Theorem 2]{Kol55} or \cite[Sections 8-11]{Kov06}).

\subsection{Picard-Vessiot theory  \cite{Put-Sin}} \label{PVTheory}   A \emph{Picard-Vessiot ring} $R$ for a matrix differential equation $Y'=AY,$ where $A\in M_n(k),$ is a differential ring containing $k$ and having the following properties:

\begin{enumerate}[(a)]
	\item $R$ is a simple differential ring, that is, $R$ has no nontrivial differential ideals.\\
	 \item There is a \emph{fundamental matrix} $F$ for $Y'=AY$ with entries in $R,$ that is, a matrix $F\in \mathrm{GL}_n(R)$ such that $F'=AF.$\\  \item $R$ is minimal with respect to these properties, that is,  $R$ is generated as a ring by the elements of $k,$ the entries of $F$ and  inverse of the determinant of $F.$   \end{enumerate}

By a \emph{differential $k-$module} $(M,\partial),$ we mean a finite dimensional $k-$module $M$ and an additive map $\partial: M\to M$ such that $\partial(\alpha m)=\alpha'm+\alpha\partial(m)$ for all $\alpha\in k$ and $m\in M.$  Let $M$ be a differential $k-$module. By fixing a $k-$basis $e_1,\dots, e_n$ of $M,$ we obtain a matrix $A=(a_{ij})\in M_n(k)$ such that $\partial(e_i)=-\sum_j a_{ji}e_j$ and a  corresponding matrix differential equation $Y'=AY.$  Choosing any other basis will amount to obtaining a differential equation of the form $Y'=\tilde{A}Y,$ where $\tilde{A}=B'B^{-1}+BAB^{-1}$ for some $B\in \mathrm{GL}_n(k).$ Furthermore, if $R$ is a Picard-Vessiot ring for $Y'=AY$ with fundamental matrix $F$ then $(BF)'=\tilde{A}BF$ and thus $R$ is also the Picard-Vessiot ring for  $Y'=\tilde{A}Y.$ This observation allows one to define a \emph{Picard-Vessiot ring} for a differential module $M$ to be a Picard-Vessiot ring for a corresponding matrix differential equation $Y'=AY$ of $M.$ Picard-Vessiot rings are integral domains and their field of fractions are called  \emph{Picard-Vessiot extensions}. Let  $E$ be  a Picard-Vessiot extension of $k$ with Picard-Vessiot ring $R$ then from  \cite[Corollary 1.38]{Put-Sin} it follows that  $R$ is  the $k-$algebra generated by all the solutions in $E$ of all linear differential equations over $k.$ 

 Let $M$ be a $k-$differential module with matrix differential equation $Y'=AY$ and $M^{\vee}$ be the dual of a differential module of $M.$ Then $Y'=-A^t Y,$ where $A^t$ is the transpose of $A,$ is a matrix differential equation corresponding to $M^{\vee}.$ Thus, if $R$ is a Picard-Vessiot ring for $M$ with fundamental matrix $F\in \mathrm{GL}_n(R)$ then $\left((F^t)^{-1}\right)'=-A^t (F^t)^{-1}$ and thus $M$ and $M^{\vee}$ have the same Picard-Vessiot ring $R$.  Let $k[\partial]$ be the ring of differential operators over $k$ and $\mathscr L:=\partial^n+a_{n-1}\partial^{n-1}+\cdots+a_0\in k[\partial].$ Then for $M=k[\partial]/k[\partial]\mathscr L,$ a matrix equation corresponding to the dual $M^{\vee}$ is $Y'=A_\mathscr L Y,$  where  $A_\mathscr{L}$ is the companion matrix of $\mathscr L.$ Therefore,  if $R$ is a Picard-Vessiot ring for $M^{\vee}$ then the fundamental matrix $F\in\mathrm{GL}_n(R)$ for $Y'=A_\mathscr L Y$ is a Wronskian matrix.
$$A_\mathscr{L}=\begin{pmatrix}0&1&0&0&\cdots&0\\ 0&0&1&0&\cdots&0\\ \vdots&\vdots&\vdots&\vdots&\vdots&\vdots\\ 0&0&0&\cdots&\cdots&1\\ -a_0&-a_1&\cdots&\cdots&\cdots&-a_{n-1} \end{pmatrix},\qquad F=\begin{pmatrix}y_1&y_2&\cdots&\cdots&y_n\\ y'_1&y'_2&\cdots&\cdots&y'_n\\ \vdots&\vdots&\vdots&\vdots&\vdots\\ y^{(n-1)}_1&y^{(n-1)}_2&\cdots&\cdots&y^{(n-1)}_n \end{pmatrix}.$$
Note that $y_1,\dots,y_n$ are $C-$linearly independent and the $C-$vector space $V$ spanned by $y_1,\dots, y_n$ is the set of all solutions of $\mathscr L(y)=0.$

\section{Outline of the proof of the main theorem}\label{OPMT}

Here we shall discuss the strategies involved in the proofs of Theorem \ref{intromaintheorem} (\ref{RW-ISNE}) and (\ref{RW-SNE}). Let $f(y,y')=0$ be a differential equation having a nonalgebraic solution $u$ in a strongly normal extension $E$ of $k$ and let $K:=k(u,u').$ Observe that $\mathrm{tr.deg}(K|k)=1.$ 

The proof of Theorem \ref{intromaintheorem}(\ref{RW-SNE}) appears as  Theorem \ref{trdonesubfields-SNE}. It involves decomposing $E$ as in (\ref{SNE-resolution}) and considering the following cases:

\begin{minipage}{.8\textwidth}Case(i). Suppose that $\mathrm{tr.deg}(KF|F)=1.$ Since the  differential Galois group $\gal(E|F)$ is a  connected linear algebraic group,  $\gal(E|F)$ is a union of its Borel subgroups. Now $F$ is algebraically closed in $E$ implies that one can  choose  a Borel subgroup $\mathscr B$  such that $\mathrm{tr.deg}(KE^{\mathscr B}|E^{\mathscr B})=1.$  Since $\mathscr B$ is  (connected) solvable, $E$ is a liouvillian Picard-Vessiot extension of $E^{\mathscr B}.$ We classify differential subfields of liouvillian Picard-Vessiot extensions (Theorem \ref{LPV-structuretheorem}) and show that either $KE^{\mathscr U}=E^{\mathscr U}(t),$ where $\mathscr U$ is the unipotent radical of $\mathscr B$ and $t'\in E^{\mathscr U}$ or $KE^{\mathscr B}=E^{\mathscr B}(t),$ where $t'/t\in E^{\mathscr B}.$  We then use the fact that the geometric genus is an invariant under base change by separable fields  and obtain that $K$ is a genus zero extension of the algebraic closure of $k$ in $K$ (see \cite[Theorem 5]{Che51}).  Thus, one can find a finite algebraic extension $\tilde{k}$ of $k$ so that $\tilde{k}K=\tilde{k}(y).$ We then use the fact that $y=(at+b)/(ct+d)$ for $a,b,c,d \in \tilde{k}E^{\mathscr U}$ and show that $y$ satisfies a Riccati  equation over $\tilde{k}.$  
\end{minipage}
\begin{minipage}{.8\textwidth}
\hspace{1em}\begin{tikzcd} [row sep=.5ex]
	& E\ar[dash]{dl}\ar[dash]{dd}\\
	KE^{\mathscr U}\ar[dash]{dr}\ar[dash]{dd}&\\
	& E^{\mathscr U} \ar[dash]{dd}\\
	KE^\mathscr{B}\ar[dash]{dr}\ar[dash]{dd}&\\
	&E^\mathscr{B}\ar[dash]{dd}\\
	KF\ar[dash]{dr}\ar[dash]{dd}&\\
	&F\ar[dash]{dd}\\
	K\ar[dash]{dr}&\\
	& k\\
\end{tikzcd}
\end{minipage}

Case(ii). Suppose that $\mathrm{tr.deg}(KF|F)=0.$ Then since $F$ is algebraically closed in $E,$ we have $K\subseteq F.$ Now $KE^0$ is an intermediate differential field of a strongly normal extension, whose Galois group is abelian. Therefore $KE^0$ itself is a strongly normal extension of $E^0$ such that $\mathrm{tr.deg}(KE^0|E^0)=1.$ Then from \cite[Theorem 3]{Kol55}, we obtain a finite algebraic extension $\tilde{k}$ of $E^0$ such that $\tilde{k}K=\tilde{k}(t,t'),$ where $t$ is a solution of a  Weierstrass differential equation over $\tilde{k}.$

\begin{remark} \normalfont  In the event that $E$ is a Picard-Vessiot extension of $k,$  one can show that $KE^0$ is a rational field generated by a solution of a Riccati  differential equation.  To see this, we consider the connected group $\gal(E|E^0)$ and its codimension one closed subgroup $\gal(E|KE^0).$ Then, the function field of the homogeneous space $\gal(E|E^0)/\gal(E|KE^0)$ is known to be rational; for example, see \cite[Theorem 4.4]{CZ17}. Since $KE^0$ is isomorphic (as fields) to $E^0(\gal(E|E^0)/\gal(E|KE^0))=E^0(x),$ we obtain that $KE^0=E^0(y)$ \cite[p.87]{Mag94}.  Now, we have $KE^{\mathscr U}=E^{\mathscr U}(t)=E^\mathscr U(y),$ where $t'\in E^{\mathscr U}$ or $KE^{\mathscr B}=E^{\mathscr B}(t)=E^\mathscr B(y),$ where $t'/t\in E^{\mathscr B}.$  One then goes on to show that such a $y$ must satisfy a Riccati  differential equation over $E^0.$
	\end{remark}

Now we shall detail the strategies involved in  proving  Theorem\ref{intromaintheorem}(\ref{RW-ISNE}) (Lemma \ref{descent-lemma} and Theorem \ref{maintheorem}).   

Step 1. Let $E$ be a strongly normal extension of $k$ and $\tilde{E}$ be a finite algebraic extension of $E.$ Let $K$  be a differential field such that $k\subset K\subseteq \tilde{E}$ and that $\mathrm{tr.deg}(K|k)=1.$ Then using the facts that $KE$ is  strongly normal over $K$ and  that the differential Galois group $\gal(KE|K)$ injects as a closed subgroup of the differential Galois group $\gal(E|k)$ and that $KE$ is an algebraic extension of $E,$ we obtain that $\mathrm{tr.deg}((K\cap E)|k)=1.$ Now from the first part of the theorem, we conclude that there is a finite algebraic extension $\tilde{k}$ of $k$ and an element $y\in \tilde{k}K$ transcendental over $\tilde{k}$  such that $\tilde{k}(K\cap E)=\tilde{k}(y,y')$ and that $y$ is a solution of either a Riccati  differential equation or a Weierstrass differential equation over $\tilde{k}.$

Step 2. (A differential descent lemma)  Let $E$ be an iterated strongly normal extension of $k,$ $\overline{E}$ be an algebraic closure of $E$ and $K$ be a finitely generated differential field intermediate to $k$ and $\overline{E}$ with $\mathrm{tr.deg}(K|k)=1.$  

\begin{minipage}{.64\textwidth} Then one can assume that $$k=E_0\subseteq E_1\subseteq \cdots\subseteq E_{n-1}\subseteq E\subseteq \overline{E},$$ where $E_i$ is a strongly normal extension of $E_{i-1},$ $E$ is transcendental over $E_{n-1},$ and $\mathrm{tr.deg}(KE_{n-1}| E_{n-1})=1.$
Now from Step 1, we have $$\overline{E}_{n-1}\subseteq \overline{E}_{n-1}(y_{n-1},y'_{n-1})\subseteq \overline{E}_{n-1}K,$$where $y_{n-1}$ is a nonalgebraic solution of a Riccati  differential equation  or a Weierstrass differential equation over $\overline{E}_{n-1}$. We first show that $\overline{E}_{n-1}K$ is a weakly normal extension of $\overline{E}_{n-2}K$ and that $y_{n-1}$ can be chosen so that $\overline{E}_{n-1}(y_{n-1},y'_{n-1})$ is stabilized by the group $\gal(\overline{E}_{n-1}K|\overline{E}_{n-2}K).$ Using these facts, we then show that one can obtain a nonalgebraic solution $y_{n-2}\in \overline{E}_{n-2}K$ of some Riccati  differential equation or a Weierstrass differential equation over $\overline{E}_{n-2}.$ A repeated application of this ``descent argument"  proves the second part of the main theorem.  \end{minipage}
\begin{minipage}{.3\textwidth}
\hspace{1.8em}\begin{tikzcd}[row sep=6ex, column sep= 1ex]
	& &\overline{E}\ar[dash]{ddd}\ar[dash]{dll}\\
	\overline{E}_{n-1}K\ar[dash]{dr}\ar[dash]{d}& &\\
	\overline{E}_{n-2}K\ar[dash]{dr} & \overline{E}_{n-1}\langle y_{n-1}\rangle \ar[dash]{dr}\ar[dash]{d}& \\
	& \overline{E}_{n-2}\langle y_{n-2}\rangle\ar[dash]{dr}& \overline{E}_{n-1}\ar[dash]{d}\\
	&& \overline{E}_{n-2}
\end{tikzcd}
	\end{minipage}


	\section{Differential Subfields of Liouvillian Picard-Vessiot Extensions} \label{liouvillian-Picard-Vessiot}
	
  In  \cite{And14},   a Galois correspondence between certain differential subfields, called the solution fields, and the observable subgroups of the differential Galois group has been established. We shall now explain relevant definitions and results from \cite{And14} that  are needed to prove a structure theorem for the intermediate differential fields of liouvillian Picard-Vessiot extensions. 
	
	\subsection*{\bf Solution algebras and solution fields}	Let $k[\partial]$ be the usual ring of differential operators. Let $M$ be a differential $k-$module of dimension $n$.   A differential field extension $E$ of $k$ having the same field of constants as $k$ is called a \emph{solution field} for $M$ if there is a morphism $\psi: M\to E$ of $k[\partial]-$modules such that $\psi(M)$ generates $E;$ in which case,  $E$ is  said to be generated by a solution $\psi.$ Let $S$ be a differential $k-$algebra and an integral domain such that its field of fractions has field of constants $C$. Then $S$ is called a \emph{solution algebra} for $M$ if there is a  morphism $\psi: M\to S$ of $k[\partial]-$modules such that $\psi(M)$ generates $S.$ Let $E$ be a differential field extension of $k$ having the same field of constants as $k.$ For $i=1,\dots, n$, let $y_i\in E$ and $\mathscr{L}_i\in k[\partial]$ such that $\mathscr L_i(y_i)=0.$ Let $R$ be the differential $k-$subalgebra of $E$ generated by $y_1,\dots, y_n.$  Then $R$ is a solution algebra. To see this, consider the differential modules $k[\partial]/k[\partial]\mathscr L_i,$ where $\mathscr{L}_i(y_i)=0.$ Then the images of the  $k[\partial]-$morphisms $\oplus^n_{i=1}\psi_i: \bigoplus^n_{i=1}k[\partial]/k[\partial]\mathscr L_i\to E$ defined by $\psi_i(1)=y_i$ generates $R.$ Similarly, the differential field generated by $y_1,\dots, y_n$ is a solution field.  Conversely, let $S$ be a solution algebra generated by a solution $\psi.$  Then we have a map $\psi: M\to S$ of  $k[\partial]-$modules such that $\psi(M)$ generates $S$ as a $k-$algebra. Note that $\psi(M)$ is a (finite dimensional) differential $k-$module.  Let $y_1,\dots,y_n$ be a $k-$basis of $\psi(M).$ Then each $y_i$ must satisfy a linear homogeneous differential equation over $k.$ We shall summarize this observation in the following

	\bprop  \label{solutionfields-algebras}Solution fields (respectively, Solution algebras) over $k$ are generated as a field (respectively, as a $k-$algebra) by solutions of linear homogeneous differential equations over $k.$
	\eprop
	
	Let  $M^{\vee}$ be the dual of $M.$  A differential field  $E$  is called a \emph{Picard-Vessiot field} if it has the same field of constants as $k$ and the $C-$modules Sol$(M,E):=$ Hom$_{k[\partial]}(M,E)$  and Sol$(M^{\vee},E):=$ Hom$_{k[\partial]}(M^{\vee},E)$ are of dimension $n$ over $C$ and $E$ is minimal with respect to these properties.   As noted in Section \ref{DGT}, $M$ and $M^\vee$ have the same Picard-Vessiot extension and thus it is readily seen that the notions of a Picard-Vessiot field for $M$ and a Picard-Vessiot extension for $M$ are one and the same.  Observe that if $E$ is a Picard-Vessiot field of a differential $k-$module $M$ of dimension $n$ then $E$ is a solution field of  $M^n.$ 
\begin{center}
	
\end{center}

	\begin{theorem}(\cite[Lemma 4.2.2, Theorem 1.2.2]{And14})\label{observable} Let $\langle M\rangle^\otimes$ be the tannakian category over $C$ generated by the differential $k-$module $M.$
		\begin{enumerate}[(i)]
			\item The quotient field of a solution algebra $S$ for  $M$ is a solution field for $M$.\\
			\item Conversely, any solution field $K$ for $M$ is the quotient field of (non unique) solution algebra $\mathcal{S}$ for $M$.\\
			\item Any solution field for $N\in \langle M\rangle^\otimes$ embeds as differential subfield of a Picard-Vessiot field for $M.$\\
			\item Let $E$ be the Picard-Vessiot field for $M$ with Galois group $\gal$, then an intermediate differential field $k \subseteq K  \subseteq E$ is a solution field for $N\in \langle M\rangle^\otimes$ if and only if the corresponding subgroup $\galsub \subseteq \gal$ is observable (i.e. $\gal/\galsub$ is quasi-affine).
		\end{enumerate}
		
	\end{theorem}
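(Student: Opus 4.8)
The statement is \cite[Theorem 1.2.2, Lemma 4.2.2]{And14}; I sketch how such a proof runs. The two key inputs are the Tannakian identification $\langle M\rangle^\otimes\simeq\operatorname{Rep}_C(\gal)$ of the category generated by $M$ with the representations of the differential Galois group $\gal$ of the Picard--Vessiot field $E$ of $M$ — which in particular makes $\gal$ an \emph{affine} algebraic group, being a closed subgroup of $\mathrm{GL}(M)$ — together with the structure theory of observable subgroups of affine algebraic groups. Items (i) and (ii) are then formal bookkeeping in the spirit of Proposition \ref{solutionfields-algebras}. For (i), composing a generating solution $\psi\colon M\to S$ with the inclusion $S\hookrightarrow\operatorname{Frac}(S)$ yields a generating solution into the field $\operatorname{Frac}(S)$, whose field of constants is $C$ by hypothesis, so $\operatorname{Frac}(S)$ is a solution field for $M$. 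For (ii), given a solution field $K$ with generating solution $\psi\colon M\to K$, the image $\psi(M)$ is a finite-dimensional $k[\partial]$-submodule of $K$, and the $k$-subalgebra $\mathcal S:=k[\psi(M)]$ it generates is an integral domain whose fraction field is $K$, still with field of constants $C$; hence $\mathcal S$ is a solution algebra for $M$ with $\operatorname{Frac}(\mathcal S)=K$.

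For (iii), let $K$ be a solution field for $N\in\langle M\rangle^\otimes$; by (ii) write $K=\operatorname{Frac}(\mathcal S)$ with $\mathcal S$ a solution algebra for $N$, and let $R_N$ be a Picard--Vessiot ring for $N$ over $k$ with fraction field $E_N$. Since $K$ has the same field of constants $C$ as $k$, the differential ring $K\otimes_k R_N$ is again simple (the usual base-change stability of Picard--Vessiot rings under constant-preserving extensions), hence a Picard--Vessiot ring for $N$ over $K$; as $K$ embeds into $K\otimes_k R_N$ by flatness, this produces a differential $k$-embedding of $K$ into a Picard--Vessiot field for $N$. Finally, because $N\in\langle M\rangle^\otimes$ the module $N$ trivializes over the Picard--Vessiot field $E$ of $M$, so all solutions of the linear equation attached to $N$ already lie in $E$; by the Put--Singer description of the Picard--Vessiot ring (\cite[Corollary 1.38]{Put-Sin}, recalled in Section \ref{PVTheory}), $R_N$ may be taken inside the Picard--Vessiot ring $R$ of $M$, whence $E_N\subseteq E$. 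Composing the two embeddings gives $K$ as a differential subfield of $E$.

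For (iv), fix $E$, put $\gal:=\gal(E|k)$, and for an intermediate differential field $K$ put $\galsub:=\gal(E|K)$, so that $K=E^{\galsub}$ by the Picard--Vessiot fundamental theorem; recall also that $\operatorname{Spec}R$ is a $\gal$-torsor over $k$. One must show that $K$ is a solution field for some $N\in\langle M\rangle^\otimes$ exactly when $\gal/\galsub$ is quasi-affine. If $\gal/\galsub$ is quasi-affine, then $O(\gal)^{\galsub}=O(\gal/\galsub)$ is a finitely generated $C$-algebra and $\gal/\galsub$ is open in its spectrum; since the regular representation $O(\gal)$ is the union of its finite-dimensional $\gal$-subrepresentations, each an object of $\operatorname{Rep}_C(\gal)=\langle M\rangle^\otimes$, one may pick such a subrepresentation $N$ that generates the algebra $O(\gal/\galsub)$, and then the fixed ring $R^{\galsub}$ (a twisted form of $O(\gal/\galsub)$ via the torsor $\operatorname{Spec}R$) is a solution algebra whose fraction field is $E^{\galsub}=K$; by (i), $K$ is a solution field for $N$. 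Conversely, if $K$ is a solution field for some $N\in\langle M\rangle^\otimes$, then by the construction in (iii) the torsor $\operatorname{Spec}R$ maps $\gal$-equivariantly into an affine $\gal$-variety functorially built from $N$, and the image of this map is a locally closed $\gal$-orbit whose point-stabilizer is exactly $\galsub$; thus $\gal/\galsub$ is isomorphic to a locally closed subvariety of an affine variety, i.e.\ quasi-affine, so $\galsub$ is observable.

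The essential difficulty is item (iv), and within it the precise dictionary turning ``$K$ is a solution field for an object of $\langle M\rangle^\otimes$'' into ``$O(\gal)^{\galsub}$ is finitely generated and $\gal/\galsub$ is quasi-affine''; this rests on identifying the fixed ring $R^{\galsub}$ (and its fraction field) with a solution algebra lying in the Tannakian category generated by $M$, and on the characterization of observable subgroups of affine algebraic groups. Items (i)--(iii) are essentially formal once the framework of \cite{And14} is in place.
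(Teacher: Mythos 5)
The paper does not prove Theorem \ref{observable} at all: it is imported verbatim from \cite[Lemma 4.2.2, Theorem 1.2.2]{And14} and used as a black box, so there is no in-paper argument to compare yours against. With the definitions as stated in this paper (where a solution algebra is \emph{required} to have fraction field with constants $C$), your items (i) and (ii) are indeed formal, and your overall plan for (iii) and (iv) points at the right circle of ideas (base change to a Picard--Vessiot ring, the torsor $\mathrm{Spec}\,R$, observability); but as written it contains steps that are false or unproved.

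Concretely: in (iii), the claim that $K\otimes_k R_N$ is simple whenever $K$ has constants $C$ is wrong. Take $k=\C(x)$, $R_N=k[T,T^{-1}]$ with $T'=T$, and $K=k(e^{x})$: then in $K\otimes_k R_N\cong K[T,T^{-1}]$ the ideal generated by $T-e^{x}$ is a proper differential ideal. The correct move is to pass to the quotient by a \emph{maximal} differential ideal, obtaining a domain with constants $C$ containing copies of both $K$ and $E_N$, and then to argue inside it that the solution space of the equation attached to $N$ is already spanned by the copy of $E_N$, so that $\psi(N)$, hence $K$, lands in $E_N\subseteq E$. This comparison is exactly the step your phrase ``composing the two embeddings'' skips: as written you embed $K$ into a Picard--Vessiot field for $N$ \emph{over $K$} and separately note $E_N\subseteq E$, but never show $K\subseteq E_N$. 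In (iv), observability of $\galsub$ does \emph{not} imply that $\mathcal{O}(\gal)^{\galsub}$ is a finitely generated $C$-algebra (that is the strictly stronger Grosshans property); the forward direction should instead use the characterization of observable subgroups by a locally closed $\gal$-orbit in a finite-dimensional representation, and one must also justify that $\mathrm{Frac}(R^{\galsub})=E^{\galsub}$ and that the torsor twist of the orbit closure is a solution algebra for an object of $\langle M\rangle^{\otimes}$ --- this identification is where the real content of Andr\'e's theorem lies. Your converse direction (``the image is a locally closed $\gal$-orbit whose stabilizer is exactly $\galsub$'') is asserted rather than argued. So the sketch is a reasonable roadmap to \cite{And14}, but it is not yet a proof.
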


  In view of the above theorem, it is natural to ask for a characterization theorem of those Picard-Vessiot extensions having only solution fields as intermediate differential fields.  Equivalently, characterize those linear algebraic groups whose closed subgroups are observable. We shall provide such a characterization theorem in the next proposition. But first, a few definitions are in order.
  
A differential field extension $E$ of $k$ is called a \emph{liouvillian extension} of $k$ if there is a tower of differential fields $$k=E_0\subseteq E_1\subseteq \cdots\subseteq E_n=E$$ such that $E_i=E_{i-1}(t_i),$ where $t_i$ is either algebraic over $E_{i-1}$ or $t'_i\in E_{i-1}$ or $t'_i/t_i\in E_{i-1}.$  If $E$ is both a Picard-Vessiot extension and a liouvillian extension of $k$ then $E$ is called a \emph{liouvillian Picard-Vessiot extension} of $k.$ It is a standard fact that liouvillian Picard-Vessiot extensions are precisely those Picard-Vessiot extensions whose differential Galois groups have solvable identity components. 

\bprop\label{LPV}
	Let $E$ be a differential field extension of $k$ having $C$ as its field of constants. Then $E$ is a liouvillian Picard-Vessiot extension of $k$ if and only if  every  differential field $K$ intermediate to $E$ and $k$ is a solution field. 
\eprop

\begin{proof}
	Let $E$ be a liouvillian extension of $k,$ $K$ be an intermediate differential subfield and $\mathscr{H}$:= $\mathscr{G}(E|K).$ We know that $\gal^0$, the identity component of the differential Galois group $\gal$ of $E$ over $k$, is a solvable linear algebraic group. From \cite[Pages 6, 12]{Gro06}, we have the following facts:
	\begin{enumerate}[(i)]
		\item A closed subgroup $\mathscr H$ of an algebraic group $\gal$ is observable if and only if  $\galsub \cap \gal^0$ is observable in $\gal^0.$\\
		\item If $\gal$ is a solvable algebraic group then any closed subgroup $\galsub$ of $\gal$ is observable. 
	\end{enumerate}
	
	Since $\gal^0$ is solvable, every closed subgroup of $\gal^0$ is observable. Thus in particular $\galsub \cap \gal^0$ is observable in $\gal^0.$ This now implies that our  closed  subgroup $\galsub$ is observable. Now by Theorem \ref{observable}, $K$ must be a solution field. To prove the converse, we suppose that $E$ is not a liouvillian Picard-Vessiot extension of $k$. Then $\gal^0$ is not solvable and therefore it  contains a non-trivial Borel subgroup $\mathscr B.$ Since $\gal/\mathscr B$ is a projective variety,  we obtain from Theorem \ref{observable} that  the differential field $K$ corresponding to $\mathscr B$ is not a solution field. \end{proof}

The following structure theorem for intermediate differential subfields of  liouvillian Picard-Vessiot extensions will be used in the proof of our main theorem.
 
\begin{theorem} \label{LPV-structuretheorem} (\cite[Corollary 3.2]{URVRS})
 Let $E$ be a liouvillian Picard-Vessiot extension of $k$ for $M$ and the differential Galois group $\mathscr{G}:=\mathscr{G}(E|k)$ be connected. Let $K$ be a differential field intermediate to $E$ and $k$. Then $K$ is a solution field generated by a solution $\psi: N\to K$  having elements $t_1,\dots,t_n\in \psi(N)$ such that for each $i,$  $t'_i = a_it_i + b_i$ for $a_i \in k$ and $b_i \in k(t_1,\dots,t_{i-1})$. Furthermore
	\begin{enumerate}[(i)]
		
		\item if $\mathscr{G}$ is a unipotent algebraic group then we may assume $a_i=0$ for each $i$ and that $t_1,\dots, t_n$ are algebraically independent. \\
		
		\item If $\mathscr{G}$ is a torus then we may assume $b_i=0.$ 
		\end{enumerate}
	
\end{theorem}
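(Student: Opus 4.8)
The plan is to leverage Theorem \ref{observable} together with Proposition \ref{LPV} to first obtain that every intermediate differential field $K$ is a solution field, and then to refine the generating set using the group-theoretic structure of a connected solvable linear algebraic group. Since $\mathscr G$ is connected and $E$ is a liouvillian Picard-Vessiot extension, $\mathscr G$ is a connected solvable group, so it admits a decomposition $\mathscr G = \mathscr U \rtimes \mathscr T$, where $\mathscr U$ is the unipotent radical and $\mathscr T$ a maximal torus; correspondingly we get the tower $k \subseteq E^{\mathscr U} \cdot (\text{...}) $, but more usefully $E^{\mathscr T}$ is a Picard-Vessiot extension of $k$ with unipotent Galois group and $E$ is Picard-Vessiot over $E^{\mathscr U}$ with torus Galois group. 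The idea is to prove the statement first in the two extreme cases --- $\mathscr G$ unipotent and $\mathscr G$ a torus --- and then assemble the general case by working inside this tower.

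First I would treat the unipotent case. Here, since $\mathscr G$ is unipotent, it is observable in itself and every quotient $\mathscr G/\mathscr H$ is quasi-affine (in fact affine space is a quotient when $\mathscr H$ is normal, and in general quasi-affine by Rosenlicht), so by Theorem \ref{observable}(iv) every intermediate field $K$ is a solution field for some $N \in \langle M\rangle^\otimes$. By Proposition \ref{solutionfields-algebras}, $K$ is generated over $k$ by solutions $y_1,\dots,y_m$ of linear homogeneous equations over $k$ lying inside $E$. Because $\mathscr G$ is unipotent, the action of $\mathscr G$ on the finite-dimensional $C$-span of the $\mathscr G$-orbit of the $y_i$ is by unipotent matrices, so we can choose a basis $t_1,\dots,t_n$ of the differential $k$-module $\psi(N)$ (equivalently, of the $k$-span of the $\mathscr G$-orbit) that is in Jordan/flag form for the $\mathscr G$-action: $\sigma(t_i) = t_i + (\text{combination of } t_1,\dots,t_{i-1})$ with constant coefficients for every $\sigma \in \mathscr G$. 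Differentiating and using that the constants are fixed, one reads off that $t_i' \in k(t_1,\dots,t_{i-1})$, i.e. $t_i' = b_i$ with $b_i \in k(t_1,\dots,t_{i-1})$; in fact a standard argument (the derivation is $\mathscr G$-equivariant and lands in the same flag) shows $b_i$ can be taken in $k[t_1,\dots,t_{i-1}]$ or at least in the required field, giving $a_i = 0$. Algebraic independence of $t_1,\dots,t_n$ follows because a unipotent group acting on a solution field with these relations has transcendence degree equal to $n$ minus the dimension of the stabilizer, and after discarding redundant $t_i$'s (those in the span of earlier ones) we may assume the flag is "tight", forcing $t_1,\dots,t_n$ algebraically independent. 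This handles item (i).

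Next I would treat the torus case. If $\mathscr G = \mathscr T$ is a torus, then $E$ is a Picard-Vessiot extension with torus Galois group, so $E$ is generated over $k$ by finitely many elements $t_i$ with $t_i'/t_i \in k$ (exponentials of integrals), since the rational representations of a torus are direct sums of characters and each character gives a one-dimensional $\mathscr G$-submodule $k t_i$ with $\sigma(t_i) = \chi(\sigma) t_i$, whence $t_i'/t_i$ is $\mathscr G$-fixed and lies in $k$. Given an arbitrary intermediate $K$, which is again a solution field (tori are observable-rich for the same reason: all closed subgroups of a diagonalizable group are observable since the quotient is again affine), $K$ is generated by solutions of linear equations over $k$ inside $E$; decomposing the relevant differential module into characters as above yields generators $t_1,\dots,t_n \in K$ with $t_i'/t_i \in k$, i.e. $t_i' = a_i t_i$ with $a_i \in k$ and $b_i = 0$. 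This gives item (ii). Finally, for the general connected solvable $\mathscr G$, I would run the tower $k \subseteq E^{\mathscr U}$ (unipotent part... wait, $E^{\mathscr{U}}$ has Galois group $\mathscr{G}/\mathscr{U} \cong \mathscr T$) — more precisely: $E^{\mathscr U}$ is Picard-Vessiot over $k$ with torus Galois group and $E$ is Picard-Vessiot over $E^{\mathscr U}$ with unipotent Galois group $\mathscr U$. Intersecting $K$ with this tower: apply the torus case to $K \cap E^{\mathscr U}$ over $k$ to get generators with $b_i = 0$, $a_i \in k$, forming an initial segment $t_1,\dots,t_r$; then apply the unipotent case to the extension $K \cdot E^{\mathscr U}$ over $E^{\mathscr U}$ to get further generators $t_{r+1},\dots,t_n$ with $t_i' \in E^{\mathscr U}(t_{r+1},\dots,t_{i-1})$, and then descend the coefficients to the required form $t_i' = a_i t_i + b_i$ with $a_i \in k$, $b_i \in k(t_1,\dots,t_{i-1})$ by absorbing the generators of $E^{\mathscr U}$ among the earlier $t_j$'s (one checks $K$ together with these $E^{\mathscr U}$-generators is still a solution field for an object of $\langle M\rangle^\otimes$, since $E^{\mathscr U} \subseteq E$ is already such).

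The main obstacle I expect is the bookkeeping in the general (mixed) case: matching up the two sets of generators so that the single flag relation $t_i' = a_i t_i + b_i$ with $a_i \in k$ (not merely in $E^{\mathscr U}$) and $b_i \in k(t_1,\dots,t_{i-1})$ holds \emph{uniformly}. The subtlety is that the unipotent-case analysis is carried out over the base $E^{\mathscr U}$, so a priori the coefficients live in $E^{\mathscr U}$ rather than $k$; one needs the $\mathscr G$-equivariance of the whole construction --- i.e. that the derivation and the flag can be chosen compatibly with the full group $\mathscr G$, not just with $\mathscr U$ --- to pull the $a_i$ back into $k$ and to realize $b_i$ inside $k(t_1,\dots,t_{i-1})$ after reordering. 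This is exactly the point where one invokes that the chosen $N \in \langle M\rangle^\otimes$ and the solution $\psi$ can be taken so that $\psi(N)$ is a $\mathscr G$-stable flag refining the $\mathscr U$-flag, with the torus acting diagonally on the graded pieces; since this is the content of \cite[Corollary 3.2]{URVRS} being quoted, the proof here will mainly consist of citing that result and spelling out how items (i) and (ii) are the stated specializations.
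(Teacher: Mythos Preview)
Your approach diverges from the paper's and, as written, has a genuine gap.

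The core problem is the parenthetical claim that $\psi(N)$ coincides with ``the $k$-span of the $\mathscr G$-orbit''. It does not: $\psi(N)\subseteq K=E^{\mathscr H}$, whereas the $\mathscr G$-orbit of an element of $\psi(N)$ ranges over $E$, since $K$ is $\mathscr G$-stable only when $\mathscr H$ is normal in $\mathscr G$. A $\mathscr G$-flag basis of the envelope therefore lives in $E$, and there is no reason its members lie in $K$, let alone in $\psi(N)$. This undercuts both your unipotent and your torus paragraphs. (In the torus case the conclusion can be rescued cheaply, because $\mathscr T$ abelian forces $\mathscr H\trianglelefteq\mathscr T$, so $K$ is itself Picard-Vessiot over $k$ with torus group; but that is not the argument you wrote.) Your mixed-case plan inherits the same defect and adds another: generators of $K\cap E^{\mathscr U}$ over $k$ together with generators of $KE^{\mathscr U}$ over $E^{\mathscr U}$ generate $KE^{\mathscr U}$, not $K$; ``absorbing the generators of $E^{\mathscr U}$'' among the $t_j$ inserts elements outside $K$ into the list, which the statement forbids. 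You identify this obstacle yourself but then defer to the citation rather than resolve it.

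The paper's proof avoids all of this by never leaving $\psi(N)$. Its engine is Kolchin's theorem that when the differential Galois group is connected solvable every monic operator in $k[\partial]$ factors completely over $k$. The inductive step is: given $k\subseteq L\subsetneq K$, choose $\mathscr L\in k[\partial]$ of least positive order annihilating some $y\in\psi(N)\setminus L$; factor $\mathscr L=\mathscr L_{m-1}\mathscr L_1$ with $\mathscr L_1=\partial-a$, $a\in k$; then $\mathscr L_1(y)\in\psi(N)$ is killed by $\mathscr L_{m-1}$ and so, by minimality, lies in $L$, giving $y'=ay+b$ with $b\in L$. Iterating from $L=k$, then $L=k(t_1)$, etc., builds the tower uniformly with no unipotent/torus split. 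For (i) one uses that in the unipotent case $\mathscr L$ already has a nonzero solution $\alpha\in k$, so one may take $\mathscr L_1=\partial-\alpha'/\alpha$ and replace $y$ by $y/\alpha$; for (ii) one uses that in the torus case $y'-ay=b$ already has a solution $\alpha\in L$, so $y-\alpha$ works. This operator-factorisation route is exactly what forces $a\in k$ at every stage, the obstacle your tower approach could not overcome.
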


\begin{proof}
	To avoid triviality, we shall assume $k\neq K.$ We know from Proposition \ref{LPV} that $K$ is a solution field. Let $\psi: N\to K$ be a $k[\partial]-$morphism and $y_1,\dots, y_m$ be a $k-$basis for the $k-$differential module $\psi(N).$ Then, each $y_i$ is a solution of some linear homogeneous differential equation over $k$ (see Proposition \ref{solutionfields-algebras}). Let $L$ be any differential field such that $k \subseteq L \subsetneq K$.  We claim that there is a $y \in \psi(N) \setminus L$ such that $y' = ay +b$ for some $a \in k,$ $b \in L$ and that $a$ can be taken zero if $\mathscr{G}$ is unipotent and that $b$ can be taken to be zero if $\mathscr{G}$ is a torus. 
	
	 Since $L\supsetneq K$, there is an $i$ such that $y_i\in \psi(N) \setminus L.$  Let $\mathscr{L} \in k[\partial]$ be of smallest positive degree $m$ such that $\mathscr{L}(y) = 0$ for some $y\in \psi(N)\setminus L$. Since $\mathscr{G}$ is connected and solvable, the differential operator is linearly reducible \cite[Theorem 2 of Section 22]{Kol48}. So, $\mathscr{L} = \mathscr{L}_{m-1} \mathscr{L}_1$, where $\mathscr{L}_{m-1}, \mathscr{L}_1 \in k[\partial]$ are of degrees $m-1$ and $1$ respectively. Let $\mathscr L_1 =  \partial - a$, $a \in k$. Observe that $\mathscr{L}_1(y) \in \psi(N)$ and $\mathscr{L}_{m-1}( \mathscr{L}_1(y)) = 0$. Therefore, from our choice of $m$, $\mathscr{L}_1(y) \in L$. Thus we have found an element $y \in \psi(N) \setminus L$ such that $y' = ay +b,$ where $a \in k$ and $b \in L$. 
	
	If $\mathscr{G}$ is unipotent then $E = k(\eta_1,\dots, \eta_n),$ where $\eta_i' \in k(\eta_1,\dots,\eta_{i-1})$ and in this case $\mathscr{L}$ admits a nonzero solution $\alpha \in k$ \cite[Proposition 2.2]{Sri20}. Thus  we may choose $\mathscr{L}_1=\partial -\left( \alpha'/\alpha \right).$ Then $y/\alpha \in \psi(N)\setminus L$ and $\left( y/\alpha \right) '=b/\alpha \in L$. Moreover, such $y/\alpha$ must be transcendental over $L$.
	
	Now suppose that $\mathscr{G}$ is a torus. Then $E= k\left( \xi_1,\dots, \xi_s\right) $, where $\xi_i'/\xi_i \in k$ for each $i$. If $b \neq 0$ then again from  \cite[Proposition 2.2]{Sri20} applied to the extension $L \left( \xi_1,\dots, \xi_s\right) $ of $L$ with $\mathscr{L}_1(y)=y'-ay=b,$ we obtain $\alpha \in L$ such that $\alpha'-a \alpha=b$. Now $y-\alpha\in \psi(N)  \setminus L$ and $(y-\alpha)'/(y-\alpha)= a \in k.$ This proves the claim.  
	
	Taking $L=k,$ we obtain an element $t_1\in \psi(N)$ and $t_1\notin k$ such that $t'_1=a_1t_1+b_1$ for $a_1,b_1\in k$. Likewise, we find elements $t_1,\dots, t_n\in \psi(N)$ and $t_i\notin k(t_1,\dots,t_{i-1})$ such that $t'_i=a_it_i+b_i,$ where $a_i\in k$ and $b_i\in k(t_1,\dots, t_{i-1})$. Since $\psi(N)$ is finite dimensional over $k$, there must be an integer $n$ such that  $\psi(N)\subset  k(t_1,\dots, t_n).$  Clearly, for that $n,$ we must have $K=k(t_1,\dots, t_n).$ 
 \end{proof}

\begin{remark}  \label{dihedral}\normalfont In the above theorem, the hypothesis that $\mathscr{G}\left( E|k\right) $ is connected allowed us to factor the differential operator $\mathscr{L}$ in $k[\partial]$, which is a crucial step in the proof. In fact, the assumption that $\mathscr{G}\left( E|k\right) $ is connected cannot be dropped. For example, consider the liouvillian extension $E=\mathbb{C}(x)\left( \sqrt{x}, e^{\sqrt{x}}\right),$ where	the derivation is $':=d/dx.$ Then $E$ is a liouvillian Picard-Vessiot extension of $\mathbb{C}(x)$ for the differential equation
		\begin{equation*}
			\mathscr{L}(y)= y''+\frac{1}{2x} y'-\frac{1}{4x}y=0.
		\end{equation*}
		The set $V:=\text{span}_{\mathbb{C}} \left\lbrace  e^{\sqrt{x}}, e^{-\sqrt{x}}\right\rbrace $ is the set of all solutions of $\mathscr{L}(y)=0$ in $E$. Since $E$ contains the algebraic
		extension $\mathbb{C}(x)\left( \sqrt{x}\right), \ \mathscr{G}\left( E|k\right) $  is not connected. One can show that the intermediate differential field $K:= \mathbb{C}(x)\langle e^{\sqrt{x}}+ e^{-\sqrt{x}} \rangle $ contains no elements satisfying a first order linear differential equation over $\mathbb{C}(x)$ other than the elements
		of $\mathbb{C}(x)$ itself \cite[p. 376]{Sri20}. \end{remark}



\section{Transcendence degree one subfields of strongly normal extensions} \label{trdone-SN}

\begin{proposition} \label{Riccati polynomials} Let $E$ be a differential field extension field of $k$ having $C$ as its field of constants. Let $L$ be a differential field intermediate to $E$ and $k.$ Suppose that there are two elements $y,t\in E$ each transcendental over $L$ such that $K=L(t)=L(y).$ Then $t$ satisfies a Riccati equation over $L$ if and only if $y$ satisfies a Riccati equation over $L.$
	\end{proposition}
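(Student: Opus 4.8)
The plan is to reduce the statement to a Möbius--transformation computation followed by an elementary piece of linear algebra, and then to exploit the symmetry of the hypothesis in $y$ and $t$ to get both implications at once. First I would record that, since $K=L(t)=L(y)$ and $y$ is transcendental over $L$ (in particular $y\notin L$), the element $y$ is a generator of the rational function field $L(t)$ over $L$; by the standard formula $[L(t):L(p(t)/q(t))]=\max(\deg p,\deg q)$ for a reduced rational function $p/q$, this forces $y=\frac{at+b}{ct+d}$ with $a,b,c,d\in L$ and $ad-bc\neq 0$. Note that $ct+d\neq 0$ in $K$ because $t$ is transcendental over $L$. Solving for $t$ gives $t=\frac{dy-b}{-cy+a}$, so the relation between $y$ and $t$ is symmetric, and it suffices to prove the single implication: if $t'=a_2t^2+a_1t+a_0$ with $a_0,a_1,a_2\in L$, then $y$ satisfies a Riccati equation over $L$.

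For this implication I would differentiate $y=(at+b)(ct+d)^{-1}$ by the quotient rule and substitute $t'=a_2t^2+a_1t+a_0$. A short expansion shows that the cross-terms involving $t\,t'$ cancel, leaving
$$y'=\frac{\alpha t^2+\beta t+\gamma}{(ct+d)^2},$$
where $\alpha,\beta,\gamma$ are certain polynomial expressions in $a,b,c,d$, their derivatives, and $a_0,a_1,a_2$; the only feature that matters is that $\alpha,\beta,\gamma\in L$.

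The key point is then that, because $ad-bc\neq 0$, the linear polynomials $at+b$ and $ct+d$ form an $L$-basis of $L\oplus Lt$, and hence their pairwise products $(at+b)^2,\ (at+b)(ct+d),\ (ct+d)^2$ form an $L$-basis of the three-dimensional space $L\oplus Lt\oplus Lt^2$ of polynomials in $t$ of degree at most two --- the change-of-basis determinant being $(ad-bc)^3\neq 0$. Therefore there exist $e,f,g\in L$ with $\alpha t^2+\beta t+\gamma=e(at+b)^2+f(at+b)(ct+d)+g(ct+d)^2$, and dividing through by $(ct+d)^2$ yields $y'=ey^2+fy+g$, a Riccati equation over $L$. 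Applying this with the roles of $y$ and $t$ interchanged gives the converse, completing the equivalence. I do not expect a genuine obstacle here; the only things needing care are the bookkeeping in the differentiation (checking that the $t\,t'$ terms cancel, so that the numerator really has degree at most $2$) and the observation that the symmetric squares of a basis of $L\oplus Lt$ again form a basis of $L\oplus Lt\oplus Lt^2$, which is immediate from the determinant computation.
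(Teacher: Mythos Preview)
Your argument is correct and follows essentially the same route as the paper: both invoke L\"uroth's theorem to write $y=\dfrac{at+b}{ct+d}$ with $ad-bc\neq 0$, and both hinge on the identity $y'(ct+d)^2=(a'c-ac')t^2+(a'd+b'c-ad'-bc')t+(b'd-bd')+(ad-bc)t'$, which shows the $t\,t'$ cross-terms cancel. The only minor difference is the direction chosen for the explicit computation: the paper assumes $y'=f(y)$ with $\deg f\le 2$, observes $f(y)(ct+d)^2$ is a polynomial of degree $\le 2$ in $t$, and simply solves the displayed identity linearly for $t'$; you instead assume $t'$ is quadratic in $t$ and use the (correct and pleasant) observation that $(at+b)^2,\ (at+b)(ct+d),\ (ct+d)^2$ form an $L$-basis of $L\oplus Lt\oplus Lt^2$ to rewrite the numerator, which is a slightly more elaborate but equally valid finish. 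Either way, symmetry of the M\"obius relation gives the other implication.
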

\begin{proof} From L{\"u}roth's theorem,  we know that there are elements $a,b,c,d\in L$ such that $ad-bc\neq 0$ and that $$y=\frac{at+b}{ct+d}.$$ 

We compute the derivative of the above equation and obtain \begin{equation*}y'(ct+d)^2= \ ca't^2+(a'd+b'c)t+db' -\left( c'at^2+(bc'+ad')t +bd' \right) +(ad-bc)t'.\end{equation*}  If  $y'=f(y)$ is a polynomial of degree $\leq 2$ then we see that $y'(ct+d)^2$ is a polynomial of degree $\leq 2.$ Since $0\neq ad-bc\in L,$ we shall solve for  $t'$ and obtain $t'=g(t),$ where $g$ is a polynomial in one variable over $L$ of degree at most $2$. \end{proof}

We recall the following facts about  a strongly normal extension $E$ of $k.$  Since $\gal:=\gal(E|k)$ is an algebraic group, there is a chain of subgroups (see \cite[Proposition 12.2]{Kov06})
\begin{equation*}
	\gal \supseteq \gal^0 \supseteq \galsub \supseteq \left\lbrace 1\right\rbrace, 
\end{equation*}
where $\gal^0$ is the identity component of $\gal$, $\galsub$ is a connected linear algebraic group as well as a closed normal subgroup of $\gal^0$ such that $\gal^0/\galsub$ is an abelian variety. From the fundamental theorem of strongly normal extensions, we have a tower of fields 
\begin{equation}\label{decomp-SNE}
	k \subseteq E^0  \subseteq F \subseteq E,
\end{equation}where $E^0$ is a finite Galois extension of $k$ with Galois group $\gal/\gal^0$, $F$ is an abelian extension of $E^0,$ that is $\gal(F|E^0)\cong \gal^0/\galsub$ is an abelian variety and $E$ is a Picard-Vessiot extension of $F$ with $\galsub\cong \gal(E|F).$

\begin{theorem}\label{trdonesubfields-SNE}
	Let $E$ be a strongly normal extension of $k$ and $K$ be a differential field intermediate to $E$ and $k.$ If tr.deg$(K|k)=1$ then  there is a finite algebraic extension $\tilde{k}$ of $k$ and an element $t\in \tilde{k}K$ such that one of the following holds:
	
	\begin{enumerate}[(i)]
		\item $\tilde{k}K=\tilde{k}(t)$ and $t$ is a solution of a Riccati equation over $\tilde{k}.$ \\
		\item $\tilde{k}K=\tilde{k}(t,t')$ and $t$ is a solution of a  Weierstrass differential equation over $\tilde{k}.$
	\end{enumerate}

\end{theorem}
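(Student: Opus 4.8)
The plan is to follow the strategy of Section~\ref{OPMT}, working with the decomposition (\ref{decomp-SNE}): $k\subseteq E^0\subseteq F\subseteq E$, where $E^0/k$ is finite Galois, $\mathscr G(F|E^0)$ is an abelian variety, and $E$ is a Picard--Vessiot extension of $F$ whose Galois group $\mathscr H:=\mathscr G(E|F)$ is a connected linear algebraic group. A fact I will use throughout is that each of $E^0$, $F$, and the fixed fields $E^{\mathscr B}$, $E^{\mathscr U}$ introduced below is relatively algebraically closed in $E$, because a connected algebraic group has no proper closed subgroup of finite index. Since $KF$ is generated over $F$ by $K$ and $\mathrm{tr.deg}(K|k)=1$, we have $\mathrm{tr.deg}(KF|F)\in\{0,1\}$. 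If $\mathrm{tr.deg}(KF|F)=0$, then $K$ is algebraic over $F$, hence $K\subseteq F$, and $KE^0$ is a differential field intermediate to $F/E^0$; as $\mathscr G(F|E^0)$ is abelian, every closed subgroup is normal, so $KE^0$ is strongly normal over $E^0$ with Galois group a quotient of an abelian variety (again an abelian variety) of dimension $\mathrm{tr.deg}(KE^0|E^0)=1$, i.e.\ an elliptic curve. Then \cite[Theorem~3]{Kol55} furnishes a finite algebraic extension $\tilde k$ of $E^0$ (hence of $k$) and an element $t\in\tilde k(KE^0)=\tilde kK$ with $\tilde kK=\tilde k(t,t')$, where $t$ solves a Weierstrass differential equation over $\tilde k$; this is alternative (ii). For the remainder we assume $\mathrm{tr.deg}(KF|F)=1$.

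Since $\mathscr H$ is a connected linear algebraic group, it is the union of its Borel subgroups, so $\bigcap_{\mathscr B}E^{\mathscr B}=E^{\mathscr H}=F$; as $K\not\subseteq F$, I may fix a Borel subgroup $\mathscr B\subseteq\mathscr H$ with $K\not\subseteq E^{\mathscr B}$, and then $\mathrm{tr.deg}(KE^{\mathscr B}|E^{\mathscr B})=1$. Let $\mathscr U$ be the unipotent radical of $\mathscr B$, a normal subgroup; then $E$ is a liouvillian Picard--Vessiot extension of $E^{\mathscr U}$ with (unipotent) Galois group $\mathscr U$, and $E^{\mathscr U}$ is a liouvillian Picard--Vessiot extension of $E^{\mathscr B}$ with (torus) Galois group $\mathscr B/\mathscr U$. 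If $K\not\subseteq E^{\mathscr U}$, then $\mathrm{tr.deg}(KE^{\mathscr U}|E^{\mathscr U})=1$, and since the generators supplied by Theorem~\ref{LPV-structuretheorem}(i) for the intermediate field $KE^{\mathscr U}$ of $E/E^{\mathscr U}$ are algebraically independent, there is exactly one, so $KE^{\mathscr U}=E^{\mathscr U}(t)$ with $t'\in E^{\mathscr U}$. If instead $K\subseteq E^{\mathscr U}$, then Theorem~\ref{LPV-structuretheorem}(ii) applied to the intermediate field $KE^{\mathscr B}$ of $E^{\mathscr U}/E^{\mathscr B}$ yields generators $t_1,\dots,t_n$ with each $t_i'/t_i\in E^{\mathscr B}$; writing each $t_i$ as a unit of $E^{\mathscr B}$ times a monomial in exponential generators of $E^{\mathscr U}/E^{\mathscr B}$ and using that the rank-one lattice spanned by their exponent vectors is cyclic, these collapse to a single $t$ with $KE^{\mathscr B}=E^{\mathscr B}(t)$ and $t'/t\in E^{\mathscr B}$. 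In either case $K$ lies inside a purely transcendental extension of transcendence degree one of its field of constants.

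I next claim $K$ has geometric genus zero over the relative algebraic closure $k_1$ of $k$ in $K$, which is a finite extension of $k$. Fixing an algebraic closure $\bar k\supseteq k_1$, the fields $K\bar k$ and $\bar kE^{\mathscr U}$ (respectively $\bar kE^{\mathscr B}$) are algebraically disjoint over $\bar k$, since $\mathrm{tr.deg}((K\bar k)(\bar kE^{\mathscr U})|\bar k)=1+\mathrm{tr.deg}(\bar kE^{\mathscr U}|\bar k)$; over the algebraically closed field $\bar k$ this makes them linearly disjoint, so $(K\bar k)(\bar kE^{\mathscr U})=\bar kE^{\mathscr U}(t)$ is a separable constant-field extension of $K\bar k$. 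By the invariance of the geometric genus under such extensions (\cite[Theorem~5]{Che51}) together with the rationality of $\bar kE^{\mathscr U}(t)$ over $\bar kE^{\mathscr U}$, the genus of $K\bar k/\bar k$, equivalently of $K/k_1$, is zero; hence after a further finite extension of $k_1$ of degree at most two there is a finite algebraic extension $\tilde k$ of $k$ with $\tilde kK=\tilde k(y)$ for some $y$ transcendental over $\tilde k$. Finally set $L:=\tilde kE^{\mathscr U}$ (respectively $L:=\tilde kE^{\mathscr B}$); then $L(t)=\tilde kK\cdot E^{\mathscr U}=L(y)$, with $t$ and $y$ transcendental over $L$, and $t$ satisfies a Riccati equation over $L$ (namely $t'=b\in E^{\mathscr U}\subseteq L$, respectively $t'=ct$ with $c\in E^{\mathscr B}\subseteq L$, which is nontrivial because $t\notin L$). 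By Proposition~\ref{Riccati polynomials}, $y$ satisfies $y'=a_2y^2+a_1y+a_0$ with the $a_i\in L$; but $y'\in\tilde k(y)$ because $\tilde kK=\tilde k(y)$ is a differential field, so writing $y'$ in lowest terms over $\tilde k$ and comparing with this polynomial identity in $L(y)$ — coprimality in $\tilde k[y]$ being preserved in $L[y]$ — forces the denominator to be a constant and the numerator to have degree at most two. Thus $y$ satisfies a Riccati equation over $\tilde k$, which is alternative (i).

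The step I expect to be the main obstacle is the genus computation — transferring geometric genus zero from the rational extension $KE^{\mathscr U}$ (or $KE^{\mathscr B}$) down to $K$ itself — which hinges on stability of the geometric genus under the constant-field extension from $k_1$ to $k_1E^{\mathscr U}$, an extension that is transcendental but (in characteristic zero) separable. The rest is more routine: keeping track of transcendence degrees and composita, the monomial/lattice calculation in the torus case, and the descent of the Riccati equation from $L$ down to $\tilde k$.
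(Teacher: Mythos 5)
Your proposal is correct and follows essentially the same route as the paper: the decomposition $k\subseteq E^0\subseteq F\subseteq E$, the case split on whether $K\subseteq F$, a Borel subgroup with the tower $E\supseteq E^{\mathscr U}\supseteq E^{\mathscr B}\supseteq F$, Theorem \ref{LPV-structuretheorem}, genus invariance under separable constant-field extension plus a rational point to get $\tilde kK=\tilde k(y)$, Proposition \ref{Riccati polynomials} together with the coprimality argument to descend the Riccati equation to $\tilde k$, and Kolchin's theorem for the abelian (Weierstrass) case. The only real divergence is cosmetic: in the torus subcase you collapse the generators from Theorem \ref{LPV-structuretheorem}(ii) via a semi-invariant/monomial lattice argument, whereas the paper simply observes that $KE^{\mathscr B}$ is itself a Picard--Vessiot extension of $E^{\mathscr B}$ whose group is a one-dimensional quotient of a torus, i.e.\ $\mathrm{G_m}$, hence generated by a single exponential.
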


\begin{proof} 
	
Decompose $E$ into a tower of fields as in Equation (\ref{decomp-SNE}).  Fix an algebraic closure $\overline{E}$ of  $E$ and extend the derivation of $E$ to a derivation of $\overline{E}.$ Now we  split the proof into two cases.

Case (i): Suppose that $KF\supsetneq F$. Choose a finite algebraic extension $\tilde{k}\subset \overline{E}$ of $k$ so that there is a nonsingular projective curve $\Gamma$ defined over $\tilde{k},$ the function field $\tilde{k}(\Gamma)\cong \tilde{k}K$ and that $\Gamma$ has a $\tilde{k}-$point. We first claim that $\Gamma$ is a rational curve, that is, $\tilde{k}K=\tilde{k}(t).$ 

Since the compositum $\tilde{k}E$ remains a strongly normal extension of $\tilde{k}$ (see \cite[Theorem 5]{Kol53}),  for convenience of notation, we shall replace $\tilde{k}$ by $k.$  Choose an element $z\in KF\setminus F.$ Since  $\gal(E|F)$ is a connected linear algebraic group  and since a connected linear algebraic group is a union of its Borel subgroups, by the fundamental theorem, we obtain that $$\bigcap_{\mathscr B, \ \text{Borel subgroups}}E^{\mathscr B}=F.$$ Thus, there is a Borel subgroup $\mathscr B$ of $\gal(E|F)$ such that $z\in E\setminus E^{\mathscr B}.$  Note that $\mathscr{B}=\mathscr U\rtimes \mathscr T,$ where $\mathscr U$ is the unipotent radical and $\mathscr T$ is a torus. Thus $E$ can be further decomposed as follows: \begin{equation*}E\supseteq E^{\mathscr U}\supseteq E^{\mathscr B}\supseteq F,\end{equation*}  
\begin{enumerate} [(i)] \item $E$ is a Picard-Vessiot extension of  $E^{\mathscr{U}}$ whose Galois group $\mathscr U$ is unipotent.\\
		\item $E^{\mathscr U}$ is a  Picard-Vessiot extension of $E^{\mathscr B}$ whose Galois group is isomorphic to the torus $\mathscr T\cong\mathscr B/\mathscr{U}.$
	\end{enumerate}

Suppose that the compositum $E^{\mathscr U}K$ properly contains $E^{\mathscr U}.$ Then since $\mathscr U$ is connected, $E^{\mathscr U}$ is algebraically closed in $E^{\mathscr U}K$ and we obtain that $\mathrm{tr.deg}(E^\mathscr{U}K| E^\mathscr U)=1.$ Now we apply  Theorem \ref{LPV-structuretheorem} and obtain $E^\mathscr UK=E^\mathscr U(y),$ where $y'\in E^{\mathscr U}.$ This, in particular, shows that the (geometric) genus of $\Gamma$ is $0.$ Since $\Gamma$ has  $k-$point, we obtain that $K=k(t)$ (see \cite[Sections 2 and 6 of Chapter 2] {Che51}).

Thus $E^{\mathscr U}K=E^{\mathscr U}(t)=E^{\mathscr U}(y).$   From Proposition \ref{Riccati polynomials}, we then have $t$ is a solution of a Riccati equation over $E^{\mathscr U}.$ Say, $t'=g(t),$ where $g$ is a polynomial over $E^\mathscr U$ of degree $\leq 2.$  Since $K=k(t)$ is a differential field, we have $t'=h_1(t)/h_2(t),$ where $h_1$ and $h_2$  are relatively prime polynomials in $k[t]$ with $h_2$ being a monic polynomial. Then $h_1$ and $h_2$ must remain relatively prime over any field extension of $k$. Now since $g(t)h_2(t)=h_1(t)$,  we must have $h_2=1$ and we obtain that the coefficients of $g$ are in $k$.

Suppose that $E^{\mathscr U}K=E^{\mathscr U},$ that is $K\subseteq E^{\mathscr U}.$ Since $\mathscr T$ is a commutative group, $KE^\mathscr B$ must be a Picard-Vessiot extension of $E^\mathscr B$ and since $\mathscr T$ is connected, $E^\mathscr B$ is algebraically closed in $E^{\mathscr U}$ and therefore in $KE^\mathscr B$ as well. Now $\mathrm{tr.deg}(KE^\mathscr B|E^\mathscr B)=1$ implies that the differential Galois group of $KE^\mathscr B$ over $E^\mathscr B$ is isomorphic to $\mathrm{G_m}$ and it follows that $E^\mathscr BK=E^\mathscr B(y)$ such that $y'/y\in E^\mathscr B$ (see \cite[Example 5.24]{Mag94}). Thus we have again shown that $\Gamma$ is a genus zero curve. Therefore $K=k(t)$ and we obtain $E^{\mathscr B}K=E^{\mathscr B}(t)=E^{\mathscr B}(y)$ with  $y'/y\in E^{\mathscr B}.$  Now a similar argument, as in the previous paragraph, shows that $t$ satisfies a Riccati equation over $k.$ This completes the proof of the theorem in this case.

Case (ii). Suppose that $K\subseteq F.$ Then since $F$ is an abelian extension of $E^0,$  every intermediate differential subfield is strongly normal over $E^0.$ In particular, $KE^0$ is  strongly normal over $E^0$ and  $\mathrm{tr.deg}(KE^0|E^0)=1.$  Now we apply \cite[Theorem 3]{Kol55} and obtain that either $KE^0=E^0(y),$ where $y'\in E^0$ or $y'/y\in E^0\setminus \{0\}$ or that $\bar{k}K=\bar{k}(t,t'),$ where $t$ is a solution of a  Weierstrass differential equation over $\bar{k}.$ If $KE^0=E^0(y)$ with  $y'\in E^0$ or $y'/y\in E^0\setminus \{0\}$  then $\gal(KE^0|E^0)\cong \mathrm{G_a}(C)$ or $\mathrm{G_m}(C)$ and in any event, we must then have a surjective morphism from the abelian variety $\gal(F|E^0)$ to the linear algebraic group $\gal(KE^0|E^0),$ which is impossible.  Therefore, $\bar{k}K=\bar{k}(t,t'),$ where $t$ is a nonalgebraic solution of a  Weierstrass differential equation over $\bar{k}.$  Since $K$ is finitely generated over $k,$  there is a finite algebraic extension $\tilde{k}$ of $k$ such that $\tilde{k}K=\tilde{k}(t,t').$  
\end{proof}

\section{Transcendence degree one subfields of iterated strongly normal extensions}\label{trdone-ISNE}

	\begin{proposition} \label{SNE-algebraic}  
	Let $E$ be a strongly normal extension of $k$ with differential Galois group $\gal$ and $\tilde{E}$ be an algebraic extension of $E$. Suppose that  $K$ is a differential field intermediate to $k$ and $\tilde{E}$ such that  $\mathrm{tr.deg}\left( K|k\right) =1$.  Then $\mathrm{tr.deg}(K\cap E | k)=1$ and there is an algebraic extension $\tilde{k}$ of $k$  such that  one of the following holds:
	\begin{enumerate}[(i)]
		\item $\tilde{k}(K\cap E)=\tilde{k}(t),$  where $t$ satisfies a Riccati differential equation over $\tilde{k}$. \\
		\item  $\tilde{k}(K\cap E)=\tilde{k}(t,t'),$ where $t$ satisfies a Weierstrass differential equation over $\tilde{k}$. 
	\end{enumerate} 
\end{proposition}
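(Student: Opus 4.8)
The plan is to deduce the statement from Theorem~\ref{trdonesubfields-SNE} applied to the differential field $K\cap E$, which is intermediate to the strongly normal extension $E$ of $k$; the whole point is to show that $\mathrm{tr.deg}(K\cap E|k)=1$, and this is exactly where the hypothesis that $\tilde E$ is \emph{algebraic} over $E$ enters. First I would record that $K$ has $C$ as its field of constants: $E$ has constants $C$, and if $c\in\tilde E$ satisfies $c'=0$ then differentiating the minimal polynomial of $c$ over $E$ shows that polynomial has constant coefficients, so $c$ is algebraic over $C$ and hence lies in $C$; thus $\tilde E$, and with it $K$, has constants $C$.

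Next I would pass to the compositum $KE\subseteq\tilde E$. Since $\tilde E$ is algebraic over $E$, so is $KE$, hence $\mathrm{tr.deg}(KE|E)=0$ and $\mathrm{tr.deg}(KE|k)=\mathrm{tr.deg}(E|k)$, which is finite because strongly normal extensions are finitely generated. By the base-change property for strongly normal extensions (\cite[Theorem 5]{Kol53}), $KE$ is strongly normal over $K$, and restriction to $E$ is a morphism of algebraic groups $\rho\colon\mathscr{G}(KE|K)\to\mathscr{G}(E|k)$: every $\sigma\in\mathscr{G}(KE|K)$ fixes $k\subseteq K$ and carries $E$ onto $E$ by strong normality of $E$ over $k$ (the constants of $E\sigma E$ lie in $C\subseteq E$). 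A $K$-automorphism of $KE$ is determined by its restrictions to $K$ and to $E$, so $\rho$ is injective and its image $H$ is a closed subgroup of $\mathscr{G}(E|k)$ satisfying
\[
\dim H=\dim\mathscr{G}(KE|K)=\mathrm{tr.deg}(KE|K)=\mathrm{tr.deg}(KE|k)-\mathrm{tr.deg}(K|k)=\dim\mathscr{G}(E|k)-1,
\]
using $\mathrm{tr.deg}(K|k)=1$ and that transcendence degree equals Galois-group dimension for strongly normal extensions. On the other hand $E^{H}$ is the set of elements of $E$ fixed by every $\sigma\in\mathscr{G}(KE|K)$, which by the fundamental theorem for $KE$ over $K$ equals $E\cap(KE)^{\mathscr{G}(KE|K)}=E\cap K$. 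Since $E$ is strongly normal over the intermediate field $E\cap K$, the fundamental theorem gives $\mathscr{G}(E|E^{H})=H$ and $\mathrm{tr.deg}(E|E^{H})=\dim H$, hence
\[
\mathrm{tr.deg}(K\cap E|k)=\mathrm{tr.deg}(E|k)-\mathrm{tr.deg}(E|E^{H})=\dim\mathscr{G}(E|k)-\dim H=1.
\]

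Now $K\cap E$ is a differential field intermediate to the strongly normal extension $E$ of $k$ with transcendence degree one over $k$, so Theorem~\ref{trdonesubfields-SNE} yields a finite---in particular algebraic---extension $\tilde k$ of $k$ and an element $t\in\tilde k(K\cap E)$ such that either $\tilde k(K\cap E)=\tilde k(t)$ and $t$ solves a Riccati equation over $\tilde k$, or $\tilde k(K\cap E)=\tilde k(t,t')$ and $t$ solves a Weierstrass differential equation over $\tilde k$; this is precisely the assertion. I expect the delicate point to be the middle step: pinning down that base change by $K$ realizes $\mathscr{G}(KE|K)$ as a codimension-one closed subgroup of $\mathscr{G}(E|k)$ with fixed field $K\cap E$ inside $E$, so that the equality ``$\mathrm{tr.deg}=\dim$'' converts ``$KE$ is algebraic over $E$'' into ``$K\cap E$ has transcendence degree one over $k$''. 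The rest is transcendence-degree bookkeeping together with the already-proved Theorem~\ref{trdonesubfields-SNE}.
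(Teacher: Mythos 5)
Your proposal is correct and follows essentially the same route as the paper: invoke Kolchin's base change theorem (\cite[Theorem 5]{Kol53}) to realize $\mathscr{G}(KE|K)$ as the closed subgroup $\mathscr{G}(E|K\cap E)$ of $\mathscr{G}(E|k)$, use the equality of transcendence degree and Galois group dimension together with the algebraicity of $KE$ over $E$ to get $\mathrm{tr.deg}(K\cap E|k)=1$, and then apply Theorem~\ref{trdonesubfields-SNE}. Your extra verifications (constants of $\tilde E$, identification of the fixed field $E^{H}=K\cap E$) are details the paper absorbs into the citation of Kolchin's theorem.
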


\begin{proof}  From \cite[Theorem 5]{Kol53}, we have that $KE$ is a strongly normal extension of $K$ and that the natural restriction map from the differential Galois group $\gal(KE|K)$ to $\gal$ is an isomorphism  onto the subgroup $\gal(E| K\cap E)$ of $\gal$.  In particular, if $\mathrm{tr.deg}(E|k)=n$ then $\mathrm{tr.deg}(E| K\cap E)=$ dim$(\gal(E| K\cap E))=$ $\mathrm{tr.deg}(KE|K)=n-1.$ Thus, $\mathrm{tr.deg}((K\cap E)|k)=1.$ Now,  the rest of the proof follows from Theorem \ref{trdonesubfields-SNE}. 
\end{proof}

\begin{lemma} \label{descent-lemma} Let $L$ be a differential field extension of $\bar{k}$ having $C$ as its field of constants. Suppose that $K$ and  $E$ are  differential subfields  intermediate to $L$ and $\bar{k}$ having the following properties: $K$ is finitely generated over $\bar{k},$ $\mathrm{tr.deg}(K|\bar{k})=1,$ $\mathrm{tr.deg}(EK|E)=1,$ $E$ is an algebraically closed field, $EK$ is weakly normal over $K$ and that the group of differential automorphisms $\gal(EK|K)$ stabilises $E.$ \begin{enumerate}[(i)] \item \label{Riccati descent} If there is a Riccati equation over $E$ having a  solution $t \in EK \setminus E$  then there is  Riccati equation over $\bar{k}$ having a solution $v \in K\setminus \bar{k}.$\\
		
	\item \label{weierstrassiandescent} If there is a Weierstrass differential equation over $E$ having a solution $t\in EK\setminus E$  then there is a Weierstrass differential equation over $\bar{k}$ having a  solution $v\in K\setminus \bar{k}.$
\end{enumerate}

\end{lemma}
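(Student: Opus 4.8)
The plan is to combine a Galois-descent argument along $EK/K$ with the birational geometry of the curve underlying $K$, reducing both parts to the corresponding statements over $\bar k$, which are available through Proposition~\ref{Riccati polynomials} and, as in Theorem~\ref{trdonesubfields-SNE}, Kolchin's classification \cite[Theorem~3]{Kol55}.

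\emph{Setting up the geometry.} First I would note $E\cap K=\bar k$: since $E\cap K$ is a differential subfield of $K$ and $\mathrm{tr.deg}(K|\bar k)=1$, either $E\cap K=\bar k$ or $K$ is algebraic over $E\cap K$, and the latter would make $EK$ algebraic over $E$, contradicting $\mathrm{tr.deg}(EK|E)=1$. As $\bar k$ is algebraically closed, $K/\bar k$ is a regular function field, hence $E$ and $K$ are linearly disjoint over $\bar k$; writing $K=\bar k(\Gamma)$ for a smooth projective curve $\Gamma$ over $\bar k$, the compositum $EK$ is then the function field of $\Gamma_E:=\Gamma\times_{\bar k}E$, and $\Gamma_E$ and $\Gamma$ have the same genus, the genus being stable under ground-field extension (cf.\ \cite[Theorem~5]{Che51}). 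I would also record that, $E$ being algebraically closed, every element of $EK\setminus E$ is transcendental over $E$; in particular $t$ is.

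\emph{Reduction to a $G$-invariant solution.} Put $G:=\gal(EK|K)$. Restriction to $E$ gives a homomorphism $G\to\gal(E|\bar k)$, well defined because $G$ stabilises $E$ and injective because an automorphism of $EK$ fixing both $E$ and $K$ fixes $EK=E\cdot K$; its image has fixed field $E^{G}=E\cap(EK)^{G}=E\cap K=\bar k$, using weak normality of $EK/K$. Since $G$ stabilises $E$, it permutes the Riccati equations over $E$ and the Weierstrass equations over $E$ (the constants $g_{2},g_{3}$ being $G$-fixed), so the set $R$ of solutions in $EK\setminus E$ of Riccati equations over $E$ and the set $W$ of solutions in $EK\setminus E$ of Weierstrass equations over $E$ are $G$-stable. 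Suppose we have produced a $G$-fixed $v\in R$ (resp.\ $v\in W$). Then $v\in(EK)^{G}=K$ and $v\notin\bar k$, and applying $\sigma\in G$ to the defining equation of $v$: since $v$ and $v'$ are $G$-fixed and $v$ is transcendental over $E$, the coefficients of that equation are uniquely determined by $v$ and $v'$ (in the Weierstrass case up to replacing $\alpha$ by $-\alpha$), hence are $G$-fixed, hence lie in $E^{G}=\bar k$; in the Weierstrass case this gives $\alpha^{2}\in\bar k$, and $\bar k$ being algebraically closed forces $\alpha\in\bar k$. So such a $v$ is exactly the solution demanded in (i) (resp.\ (ii)), and the problem is reduced to producing a $G$-fixed element of $R$ (resp.\ $W$).

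\emph{Producing the invariant solution, and the main obstacle.} Consider $\hat E:=E\langle t\rangle$, which is $E(t)$ in the Riccati case and $E(t,t')$ in the Weierstrass case; in the latter $\hat E$ is an abelian extension of $E$ with Galois group a subgroup of $\mathcal E(C)$, $\mathcal E\colon Y^{2}=4X^{3}-g_{2}X-g_{3}$ over $C$. The strategy is to replace $t$ by a suitable element of $R$ (resp.\ $W$) so that $\hat E$ becomes $G$-stable --- the natural candidate being the compositum inside $EK$ of the $G$-conjugates $\sigma(\hat E)$ --- and then to descend it, i.e.\ to show $\hat E=E\cdot\hat E^{G}$, where $\hat E^{G}=\hat E\cap K$. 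Since $\hat E^{G}\subseteq K$ has transcendence degree $\le 1$ over $\bar k$, and $\hat E^{G}=\bar k$ would force $\hat E=E$ (absurd, as $t\notin E$), we get $\mathrm{tr.deg}(\hat E^{G}|\bar k)=1$. In the Riccati case, with $\hat E$ still rational over $E$, genus-preservation under $\hat E^{G}\subseteq\hat E=E\,\hat E^{G}$ makes $\hat E^{G}/\bar k$ genus zero, so $\hat E^{G}=\bar k(v)$ for some $v\in K$; then $E(v)=\hat E=E(t)$, Proposition~\ref{Riccati polynomials} gives that $v$ satisfies a Riccati equation over $E$, and comparing it with the expression for $v'$ as a rational function of $v$ over $\bar k$ (valid since $v\in K$ and $\bar k(v)$ is a differential field) forces the coefficients into $\bar k$. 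In the Weierstrass case, $\hat E^{G}$ is an abelian extension of $\bar k$ of transcendence degree one, so by \cite[Theorem~3]{Kol55}, exactly as in Theorem~\ref{trdonesubfields-SNE}, $\hat E^{G}=\bar k(v,v')$ for some $v\in K$ satisfying a Weierstrass equation over $\bar k$. Either way $v$ is the $G$-fixed solution sought. The main difficulty is precisely the step just sketched in outline: arranging $G$-stability of $\hat E$ (keeping it rational over $E$ in the Riccati case) and carrying out the descent $\hat E=E\cdot\hat E^{G}$ --- in particular checking that passing to the compositum of the $G$-conjugates does not increase the genus, and that weak normality of $EK/K$ (which also makes $E/\bar k$ weakly normal) suffices to descend a $G$-stable intermediate field. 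Everything else is bookkeeping with transcendence degrees, genera, and the uniqueness of the Riccati/Weierstrass data of a transcendental solution.
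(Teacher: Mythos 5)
Your reduction step is fine, but the heart of the lemma is missing: you never produce the invariant (equivalently, $K$-rational) transcendental element, and you say so yourself (``the main difficulty is precisely the step just sketched in outline''). The paper's proof turns exactly on this point, and it does it with a short concrete argument you do not have: form the $G$-stable field $E^*:=E(\s_1(t),\s_1(t)',\dots,\s_n(t),\s_n(t)')$ (finitely many conjugates suffice because $EK$ is finitely generated over $E$ and each $\s(t)$ is algebraic over $E(t)$), pick any $u\in K$ transcendental over $E$, and look at the minimal polynomial of $u$ over $E^*$: its coefficients are fixed by every $\s\in\gal(EK|K)$, hence lie in $K$ by weak normality of $EK$ over $K$, and at least one coefficient $s$ lies in $E^*\setminus E$ because $u$ is transcendental over $E$. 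This gives $s\in (E^*\cap K)\setminus E$ with $\mathrm{tr.deg}(\bar k(s,s')|\bar k)=1$, which is all the ``descent'' the paper needs. Your proposed substitute --- proving the full descent $\hat E=E\cdot\hat E^{G}$ for a $G$-stabilised $\hat E$ that moreover stays rational (resp.\ elliptic) over $E$ --- is a strictly stronger statement, is nowhere argued, and is not what the paper establishes: the compositum of the conjugates of $E(t)$ (the paper's $E^*$) may well be a proper algebraic extension of $E(t)$ of higher genus, and the paper deliberately avoids controlling $E^*$ itself, applying Theorem \ref{trdonesubfields-SNE} only to the small subfield $E(s,s')$ inside an auxiliary Picard--Vessiot extension $\mathcal E$ of $E$ (Riccati case), resp.\ using that $E^*$ is a genus one abelian extension of $E$ (Weierstrass case).

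There is a second gap in your Weierstrass endgame. You invoke Kolchin's \cite[Theorem 3]{Kol55} directly for ``the abelian extension $\hat E^{G}$ of $\bar k$,'' but nothing in the hypotheses makes $\hat E^{G}$ (or $\bar k(s,s')$) a strongly normal, let alone abelian, extension of $\bar k$: $K$ sits inside $L$, not inside any strongly normal extension of $\bar k$, so Kolchin's theorem is not applicable over $\bar k$. The paper instead transfers the Weierstrass structure geometrically: $E(s,s')$ is a genus one abelian extension of $E$ whose Galois group is the $C$-points of a Weierstrass curve $\mathscr C_2$, so $\mathscr C_2$ and a nonsingular projective model $\mathscr C_1$ of $\bar k(s,s')$ are isomorphic over $E$; since both are defined over $\bar k$ and have equal $j$-invariants in $C\subseteq\bar k$, they are isomorphic over $\bar k$, which yields a generator $\omega$ of $\bar k(s,s')$ with $\omega'^2=\beta^2(4\omega^3-h_2\omega-h_3)$, $\beta\in\bar k$. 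Without this $j$-invariant step (or some replacement for it) your Weierstrass case does not close. The Riccati endgame you sketch (genus zero, Proposition \ref{Riccati polynomials}, coefficients forced into $\bar k$ because $\bar k(v)$ is a differential field) does match the paper, but only once the element $s$, resp.\ $v$, has actually been produced.
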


\begin{proof}  From our assumptions that $\gal(EK|K)$ stabilizes $E,$ $\mathrm{tr.deg}(EK|E)=1$  and that $t$ is transcendental over $k,$ we have for each $\s\in \gal(EK|K)$,   $\s(t)$ is algebraic over $E(t).$ Since $K$ is finitely generated over $\bar{k},$ 
$EK$ is finitely generated over $E$ and we conclude that  there are finitely many automorphisms $\s_1,\dots, \s_n\in \gal(EK|K),$ where $\s_1$ is the identity,  such that $\gal(EK|K)$ stabilizes the differential field $E^*:=E(\s_1(t),\s_1(t)',\dots,$ $\s_n(t), \s_n(t)').$ 

We now claim that $(E^*\setminus E)\cap K\neq \emptyset.$  Since $\mathrm{tr.deg}(K|\bar{k})=$ $\mathrm{tr.deg}(EK|E)=1,$  there is an element $u\in K\setminus \bar{k}$ such that $u$ is transcendental over $E.$  	Let $X^m+\a_{m-1}X^{m-1}+\cdots+\a_0\in E^*[X]$ be the irreducible polynomial of $u$ over $E^*$. 
	
\begin{multicols}{2}

Then for any $\sigma \in \gal(EK|K)$, we have $$u^m+\sigma( \a_{m-1})  u^{m-1}+\cdots+\sigma( \a_0) =0$$ and thus	\begin{equation*}
		\left(\s(\a_{m-1})-\a_{m-1}\right)  u^{m-1}+\cdots+\sigma(\a_0)-\a_0 =0.
	\end{equation*} Therefore  $\sigma(\a_i) =  \a_i$ for all $i=0,\dots, m-1.$ Since $EK$ is assumed to be weakly normal over $K,$ we obtain that $\a_i \in K$. That is $\a_i\in E^*\cap K$ for all $i.$ Now $u$ is transcendental over $E$ implies that there is at least one $i$ such that $s:=\a_i\in E^*\setminus E$. This proves the claim.

\columnbreak

\hspace{2.4em}\begin{tikzcd}[row sep=1.6ex, column sep= 1.6ex]
	EK\arrow[dash]{dd}[swap]{\gal(EK|K)}\arrow[dash]{dr}& & &\mathcal E\arrow[dash, dashed]{dll}\arrow[dash, dashed]{ddl}\arrow[dash, dashed]{ddd}\\
	&E^*\arrow[dash]{dr}&& \\
K\arrow[dash]{drdr}& & E(s,s')\arrow[dash]{dr}\arrow[dash]{dd}&\\
		&&&E\arrow[dash]{dd}\\ 
	&&\bar{k}(s,s')\arrow[dash]{dr}&\\
&	& & \bar{k}
\end{tikzcd}
\end{multicols}

	Now we are ready to prove the lemma. Let  $t'=P(t)$ be a Riccati equation over $E$ with $t\in EK\setminus E.$ Then for  any $\s\in\gal(EK|K),$ $\s(t)'=P_\s(\s(t))$ is also a Riccati equation, where $P_\s\in E[X]$ is the polynomial obtained by applying $\s$ to the coefficients of $P.$
	Since each of $\s_1(t), $ $\dots, \s_n(t)$ is a solution of some  Riccati equation over $E,$ one can construct  a  Picard-Vessiot extension $\mathcal E$ of $E$ containing $\s_1(t),\dots, \s_n(t).$ Now $E(s,s')$ is a differential subfield of $\mathcal E$ with $\mathrm{tr.deg}(E(s,s')|E)=1.$  Applying Theorem \ref{trdonesubfields-SNE},  we obtain $E(s,s')=E(y),$ where $y$ satisfies a Riccati  differential equation over $E$. Then $\bar{k}(s,s')$ has genus $0$ and thus $\bar{k}(s,s')=\bar{k}(v).$  Applying Proposition \ref{Riccati polynomials}, we obtain that $v$ satisfies a Riccati  equation over  $\bar{k}.$ This proves (\ref{Riccati descent}).
	
	Suppose that $t$ is a nonalgebraic solution of a  Weierstrass differential equation over $E;$ $t'^2=\a^2(4t^3-g_2t-g_3)$ with $g_2,g_3\in C.$  In this case, we claim that  the differential field $\overline{k}(s,s')$ contains a nonalgebraic solution of a  Weierstrass differential equation over $\bar{k}$, which would then complete the proof of this lemma.   Observe that  $\s(t)'^2=\s(\a)^2(4\s(t)^3-g_2\s(t)-g_3)$ for $\s\in \gal(EK|K).$ Then $E^*$, being a compositum of finitely many strongly normal extensions $E(\s_i(t), \s_i(t)')$ of $E,$  is a strongly normal extension of $E.$ Also, $E^*\subseteq EK$ and therefore $\mathrm{tr.deg}(E^*|E)=1.$   From the facts that  $E(t,t')$ is a genus one field extension of $E$ and that $E(t,t')\subseteq E^*,$  we infer that $E^*$ must be  a genus one abelian  extension of $E.$ Now we consider the  differential subfield $E(s,s')$ of $E^*.$   As  noted in the proof of case (ii) of Theorem \ref{trdonesubfields-SNE},  $E(s,s')$ is also a genus one abelian extension of $E$. This implies $\bar{k}(s,s')$ is a genus one field extension of $\bar{k}.$ 
	
	Let $\mathscr C_1$ be a nonsingular projective model for $\bar{k}(s,s').$ Then $\mathscr C_1$ is also a model for $E(s,s').$
	By \cite[Theorem 3]{Kol55}, the group $\gal(E(s,s')|E)$ is the $C-$points of a Weierstrass  elliptic curve $\mathscr C_2$ defined over $C:$  $$\mathscr C_2=X^2_2X_0-4X^3_1+h_2X^2_0X_1+h_3X^3_0,$$  where $h_2,h_3\in C.$ Furthermore, $E(s,s')=E(x, x'),$ where $(1:x:x'/\alpha)$ is a point of $\mathscr C_2$ for some $\alpha\in E.$ Thus $ \mathscr C_2$ is also a nonsingular  projective model for $E(s,s').$ Then, $\mathscr C_1$ and $\mathscr C_2$ are isomorphic over $E.$ Since both the curves are defined over $\bar{k}$ and the $ j-$ invariants $j(\mathscr C_1)=j(\mathscr C_2)\in C\subseteq \bar{k},$ the curves are isomorphic over $\bar{k}$ as well \cite[Proposition 1.4]{Sil-2009}. Thus, there is a $\bar{k}(s,s')-$point $(1:\omega:\rho)$ of $\mathscr C_2$ such that $\bar{k}(s,s')=\bar{k}(\omega, \rho).$ Then, as in the proof of \cite[Theorem 3]{Kol55}, one can show that $\bar{k}(s,s')=\bar{k}(\omega, \omega')$ and that $(1:\omega:\omega'/\beta)$ is a point on $\mathscr C_2$ for some $\beta\in\overline{k}.$ This implies $\omega'^2=\beta^2(4\omega^3-h_2\omega-h_3).$  \end{proof}

\bt \label{maintheorem} Let $E$ be an iterated strongly normal extension of $k$ and $K$ be an intermediate differential field having field transcendence degree $\mathrm{tr.deg}(K|k)=1.$  Then, there is a finite algebraic extension $\tilde{k}$ of $k$ and an element $t\in \tilde{k}K$ such that one of the following holds:

\begin{enumerate}[(i)]
	\item $\tilde{k}K$ is a finite algebraic extension of $\tilde{k}(t)$ and $t$ is a solution of a Riccati differential equation over $\tilde{k}.$ \\
	\item $\tilde{k}K$ is a finite algebraic extension of $\tilde{k}(t,t')$ and $t$ is a solution of a  Weierstrass differential equation over $\tilde{k}.$
\end{enumerate}

\et

\begin{proof} Let  $k=:E_0\subseteq E_1\subseteq E_2\subseteq \cdots \subseteq E_{n+1}=E$ be a tower of strongly normal extensions, where we may assume that $E_{n+1}$ is an algebraic extension of $E_n$ and that $E_n$ is a transcendental extension of $E_{n-1}.$ Let $\overline{E}$ be an algebraic closure of $E.$ Consider the following tower of algebraically closed differential fields \begin{equation*} \overline{E}_0\subseteq \overline{E}_1\subseteq \overline{E}_2\subseteq\cdots\subseteq \overline{E}_n=\overline{E} .\end{equation*}  
Since the differential field $\overline{E}_{n-1}K$ is contained in $\overline{E},$ by Proposition \ref{SNE-algebraic},  we obtain an element $y\in \overline{E}_{n-1}K$ transcendental over $\overline{E}_{n-1}$ such that $y$ satisfies either a Riccati or a Weierstrass differential equation over $\overline{E}_{n-1}.$ 

For the moment, let us assume that   \begin{itemize} \item[(C1)]\label{assumptionC}for all $i=1,\dots, n-1,$ $\overline{E}_iK$ is a weakly normal extension of $\overline{E}_{i-1}K$ and that the group $\gal(\overline{E}_iK|\overline{E}_{i-1}K)$ stabilises $\overline{E}_i.$\end{itemize} The differential field $K$, being a subfield of an iterated strongly normal extension, is  finitely generated over $k.$ Therefore, $\overline{E}_iK$ is finitely generated over $\overline{E}_i.$  Now, a repeated application of Lemma \ref{descent-lemma}  will produce a solution $t\in \bar{k}K\setminus \bar{k}$  of a Riccati or a Weierstrass differential equation over $\bar{k}.$ Indeed, one only needs a finite algebraic extension $\tilde{k}$ so that the differential field $\tilde{k}K$ contains both $t$ and the coefficients of the differential equation of $t$. Thus the theorem is proved under the assumption (C1).

Now we shall verify the validity of (C1). Note that $\overline{E}_{i-1}E_i$ is strongly normal over $\overline{E}_{i-1}.$ Therefore the $L^*:=\overline{E}_{i-1}KE_i$ is a strongly normal extension of $L:=K\overline{E}_{i-1}.$ Since  $\overline{E}$ and $k$ have the same field of constants, 
the group $\gal(L^*|L)$ stabilizes the strongly normal extensions $E_i$ of $E_{i-1}$ and $\overline{E}_{i-1}E_i$ of $\overline{E}_{i-1}.$ Next, we consider the set $I$ of all irreducible polynomials whose coefficients  are in $E_i.$ The group $\gal(L^*|L)$ acts on $I$ via the map $P\mapsto P_\s,$  where $P_\s$ is obtained by applying automorphism $\s$ to the coefficients of $P.$  Now we lift each $\s\in \gal(L^*|L)$ to a differential automorphism of the splitting field $L^*(I)$ of $I$ contained in $\overline{E}.$ Observe that $L^*(I)=L^*\overline{E}_i=K\overline{E_i}.$ Since the fixed field of the group $\gal(K\overline{E}_i|L^*)$ is the field $L^*$ and that the fixed field of $\gal(L^*|L)$ is $L=\overline{E}_{i-1}K,$ we obtain that the fixed field of $\gal(\overline{E}_iK|\overline{E}_{i-1}K)$ is $\overline{E}_{i-1}K.$ This shows that $\overline{E}_iK$ is weakly normal over $\overline{E}_{i-1}K.$ Every element of $\gal(\overline{E}_iK|\overline{E}_{i-1}K)$ stabilizes  the strongly normal extension $E_i$ of $E_{i-1}$ as well as the set $I$ of polynomials over $E_i.$ Therefore the group stabilizes $\overline{E}_i.$ This shows that (C1) always holds. \end{proof}

\begin{corollary} \label{SNEoverC}
	If $E$ is an iterated strongly normal extension of the field of constants $C$ then every intermediate differential field $K$ with $\mathrm{tr.deg}(K|C)=1$ has an element $t$ such that either $t'=1$ or $t'=ct$ for some nonzero $c\in C$ or $t$ is a solution of a Weierstrass differential equation over $C.$
	\end{corollary}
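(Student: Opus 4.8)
The plan is to deduce Corollary \ref{SNEoverC} as a specialization of Theorem \ref{maintheorem} to the base field $k=C$, where the constants coincide with the base field and hence all the "finite algebraic extension" hedging collapses. First I would apply Theorem \ref{maintheorem} with $k=C$: since $E$ is an iterated strongly normal extension of $C$ and $K$ is intermediate with $\mathrm{tr.deg}(K|C)=1$, we get a finite algebraic extension $\tilde{C}$ of $C$ and an element $t\in\tilde{C}K$ such that either $\tilde{C}K$ is a finite algebraic extension of $\tilde{C}(t)$ with $t$ satisfying a Riccati equation over $\tilde{C}$, or $\tilde{C}K$ is a finite algebraic extension of $\tilde{C}(t,t')$ with $t$ satisfying a Weierstrass equation over $\tilde{C}$. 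But $C$ is algebraically closed, so $\tilde{C}=C$; thus in the two cases $K$ is a finite algebraic extension of $C(t)$ or of $C(t,t')$ respectively, with the relevant differential equation defined over $C$ itself.

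Next I would handle the Riccati case. Here $t'=a_2t^2+a_1t+a_0$ with $a_i\in C$ constants, not all zero. The idea is that a Riccati equation with constant coefficients can be transformed, via a Möbius change of variable $s=(\alpha t+\beta)/(\gamma t+\delta)$ with constant coefficients, into one of the two normal forms $s'=1$ or $s'=cs$: one completes the square / shifts to remove the linear term, then according to whether the resulting quadratic $a_2 s^2 + b$ has $b=0$ (giving, after inversion, $w'= \text{const}$, i.e. exact type) or $b\neq 0$ (giving, after a Möbius transformation sending the two roots of $a_2 s^2 + b$ to $0$ and $\infty$, an equation $w'=cw$). Concretely one can invoke Theorem \ref{intromaintheorem}\eqref{autonomoustype} (stated in the introduction) applied to the Riccati-type equation: since $C(t)\subseteq$ (finite extension) $=\tilde{C}K$ realizes a Riccati-type first order equation over $C$, that result directly furnishes an element — which I will still call $t$ after renaming — in the relevant function field with $t'=1$ or $t'=ct$. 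Care is needed that the new $t$ still lies in $K$ up to a finite algebraic extension; but since we only claim existence of such a $t$ in a field whose transcendence degree over $C$ is $1$ and that is algebraic over $C(t)$, and a Möbius transform of $t$ stays in $C(t)\subseteq K$, this is automatic: the normal-form element is a rational function of the original $t$ with constant coefficients, hence lies in $C(t)\subseteq K$ itself (no further algebraic extension is introduced for the Riccati case).

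For the Weierstrass case the statement is already in the desired form: $t$ satisfies $(t')^2=\alpha^2(4t^3-g_2t-g_3)$ with $g_2,g_3\in C$ and $\alpha\in\tilde{C}=C$; absorbing the constant $\alpha$ by rescaling $t\mapsto \alpha^{-2}t$ (a constant scaling, so it stays in $K$ and preserves the constancy of the new $g_2,g_3$) puts it in the bare Weierstrass form $(t')^2 = 4t^3 - \tilde g_2 t - \tilde g_3$ over $C$, which is what is asked. So the conclusion is: $K$ contains (after possibly passing to the rational-in-$t$ reparametrization in the Riccati case, which stays inside $K$) an element $t$ with $t'=1$, or $t'=ct$ for some nonzero $c\in C$, or $t$ a solution of a Weierstrass equation over $C$.

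The main obstacle I anticipate is the bookkeeping in the Riccati case: one must be sure that the reduction to constant-coefficient normal form does not secretly require enlarging $K$, and that the degenerate subcases of the Riccati equation (e.g. $a_2=0$, or a perfect-square quadratic) are correctly routed to $t'=1$ versus $t'=ct$. The cleanest route is to cite Theorem \ref{intromaintheorem}\eqref{autonomoustype} verbatim rather than rederive the Möbius normalization; then the only remaining point is the trivial observation that $C$ being algebraically closed kills the $\tilde{k}$ in Theorem \ref{maintheorem}, and that a constant-coefficient rational substitution keeps us inside $K$ rather than a proper algebraic extension of it.
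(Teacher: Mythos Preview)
Your proposed ``cleanest route'' --- citing Theorem \ref{intromaintheorem}\eqref{autonomoustype} --- is circular: in the paper's logical order, Theorem \ref{intromaintheorem} is assembled \emph{from} Theorem \ref{trdonesubfields-SNE}, Theorem \ref{maintheorem}, and Corollary \ref{SNEoverC} (see the one-line proof immediately following the corollary), so part \eqref{autonomoustype} is precisely the statement you are trying to establish. You therefore must carry out the direct M\"obius normalization that you sketch, and fortunately that argument is correct: the case analysis ($a_2=0$ linear; $a_2\neq 0$ then complete the square to $s'=a_2s^2+b$, invert if $b=0$ to get $(1/s)'=-a_2$, or send the two roots of $a_2s^2+b$ to $0,\infty$ by a constant-coefficient M\"obius map if $b\neq 0$ to get $w'=cw$) goes through over $C$ and keeps the new element inside $C(t)\subseteq K$. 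The Weierstrass case needs no rescaling at all: with $\tilde{k}=C$ the coefficient $\alpha$ already lies in $C$, so the equation is already a Weierstrass equation over $C$ in the sense of \eqref{WDE}.

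By contrast, the paper handles the Riccati case by a Galois-theoretic argument rather than explicit normal forms: it embeds $C(y)$ in a Picard-Vessiot extension of $C$, notes that any Picard-Vessiot extension of the constant field $C$ has connected abelian Galois group, and concludes that $C(y)$ is itself Picard-Vessiot over $C$ with group $\mathrm{G_a}$ or $\mathrm{G_m}$, forcing $C(y)=C(t)$ with $t'=1$ or $t'=ct$. Your M\"obius route is more elementary and self-contained; the paper's route is shorter but leans on structural facts about Picard-Vessiot extensions of a constant field.
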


\begin{proof}
	We only need to consider the case when there is an element $y\in K\setminus C$ such that  $y'=ay^2+by+c$ for $a,b,c\in C.$ In this case, the differential field $C(y)$ can be embedded in a Picard-Vessiot extension of $C.$ Now since any Picard-Vessiot extension of $C$ has a connected abelian differential Galois group, we obtain that $C(y)$ itself must be a Picard-Vessiot extension of $C$ having a differential Galois group isomorphic to $\mathrm{G_a}(C)$ or $\mathrm{G_m}(C).$ It follows that $C(y)=C(t),$ where either $t'=1$ or $t'=ct$ for some nonzero constant $c.$  \end{proof}

\begin{proof}[Proof of Theorem \ref{intromaintheorem}]  Follows from Theorem \ref{trdonesubfields-SNE}, Theorem \ref{maintheorem} and Corollary \ref{SNEoverC}.
\end{proof}


\section{nonalgebraic solutions of first order differential equations} \label{application}
\subsection{Differential equations of general type} In the next theorem, we device a mechanism to generate differential equations of general type.
\begin{theorem} \label{generaltypegeneration}
	Let $f\in \bar{k}[Y,Z]$ be an irreducible polynomial having the following properties: \begin{enumerate}[(a)]\item $p:=(0,0)$ is a simple point of $f$ and $Z$ is the tangent line at $p.$ \\
	\item With respect to the uniformizing parameter $Y,$ both the coefficients $\lambda_2$ and $\lambda_3$ of the $Y-$adic expansion $Z=\lambda_2Y^2+\lambda_3Y^3+\cdots$   do not have any antiderivatives in $\bar{k}.$ 
	\end{enumerate}
Then the differential equation $f(y,y')=0$  is of general type.
\end{theorem}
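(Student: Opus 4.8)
The plan is to argue by contraposition: assuming $f(y,y')=0$ is \emph{not} of general type, I will produce an antiderivative in $\bar k$ for either $\lambda_2$ or $\lambda_3$ (in fact for a specific linear combination forced by the local geometry at $p$), contradicting hypothesis (b). By Theorem~\ref{intromaintheorem}(i) and the definitions of the four types, ``not of general type'' means $\bar k(f)$ embeds (after a finite algebraic base change $\tilde k/\bar k$, which changes nothing about antiderivatives since $\bar k$ is already algebraically closed, so I may take $\tilde k=\bar k$) in an iterated strongly normal extension; equivalently $f(y,y')=0$ is of Riccati or Weierstrass type. So there is an element $t\in\bar k(f)$ that is either a solution of a Riccati equation $t'=a_2t^2+a_1t+a_0$ with $\bar k(f)$ finite algebraic over $\bar k(t)$, or a solution of a Weierstrass equation with $\bar k(f)$ finite algebraic over $\bar k(t,t')$.

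First I would set up the local coordinate on the curve $\Gamma$ with function field $\bar k(f)$. By hypothesis (a), $p=(0,0)$ is a smooth point with tangent line $Z=0$, so $Y$ is a uniformizer at $p$ and we have the $Y$-adic expansion $y=Y$, $z=y'=\lambda_2Y^2+\lambda_3Y^3+\cdots$ in the completed local ring $\bar k[[Y]]$; here the derivation on $\bar k(f)$ induced by $f$ sends $y\mapsto z$. The key computation is: for any $g\in\bar k(f)$ that is regular at $p$ with expansion $g=c_0+c_1Y+c_2Y^2+\cdots$, one has $g'=\frac{dg}{dY}\cdot y' = (c_1+2c_2Y+\cdots)(\lambda_2Y^2+\lambda_3Y^3+\cdots)$, so $g'$ vanishes to order $\geq 2$ at $p$ and $\mathrm{ord}_p(g')\ge 2$ whenever $c_1\neq 0$; more precisely the $Y^2$-coefficient of $g'$ is $c_1\lambda_2$ and the $Y^3$-coefficient is $c_1\lambda_3+2c_2\lambda_2$. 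This is the engine: an ``antiderivative relation'' $h'=(\text{something involving }\lambda_2,\lambda_3)$ in $\bar k(f)$ will be pushed down to $\bar k$ by reading off constant terms, using that the only way to manufacture $\lambda_2,\lambda_3$ as derivatives is through these coefficients.

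Next I would handle the two cases. In the Riccati case, $\bar k(f)=\bar k(\Gamma)$ is a genus-$0$ function field with a rational point, so $\bar k(f)=\bar k(t)$ and $\Gamma\cong\mathbb P^1$; writing $t$ (or rather $y$ itself, which by Proposition~\ref{Riccati polynomials}-type reasoning also satisfies a Riccati equation over $\bar k$) in the local parameter, $y=Y$ already satisfies $y'=z=\lambda_2Y^2+\lambda_3Y^3+\cdots$, but $y'$ must also equal $a_2y^2+a_1y+a_0$ with $a_i\in\bar k$; matching the expansion at $p$ gives $a_0=0$, $a_1=0$, $a_2=\lambda_2$ \emph{and} forces $\lambda_3=0$ — wait, that is too strong, so the actual argument must instead use a well-chosen function. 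The correct approach: since $f$ is of Riccati type there is $t$ with $t'=a_2t^2+a_1t+a_0$; consider the differential $\omega$ on $\Gamma$ dual to the derivation (as in the discussion after the introduction) — in the Riccati case $\omega=dt/(a_2t^2+a_1t+a_0)$ has at worst simple poles with rational (integer, after normalizing) residues, i.e. is a ``logarithmic'' or ``exact'' differential; in the Weierstrass case $\omega$ is (a constant multiple of) a holomorphic differential on an elliptic curve. Now compute $\omega$ locally at $p$: $\omega=dy/z=dY/(\lambda_2Y^2+\lambda_3Y^3+\cdots)=\frac{1}{\lambda_2}Y^{-2}(1-\frac{\lambda_3}{\lambda_2}Y+\cdots)\,dY$, so $\omega$ has a \emph{double} pole at $p$ with vanishing residue (the $Y^{-1}$ term is $-\lambda_3/\lambda_2^2$ times $dY/Y$, so residue $=-\lambda_3/\lambda_2^2$ — actually nonzero in general). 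The structure I expect: a double pole with a nonzero residue is incompatible with $\omega$ being holomorphic (Weierstrass case) and, in the Riccati case, forces the residue to be a rational number, say $n$, whence $\omega-n\,\frac{dg}{g}$ for a suitable $g$ is exact near $p$ — and chasing this down yields $h\in\bar k(f)$ with $h'$ having prescribed low-order coefficients $\lambda_2$ or $\lambda_3$, and then extracting the constant terms of $h$'s expansion produces the forbidden antiderivative in $\bar k$.

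\textbf{The main obstacle.} The genuinely delicate part is the descent from ``$h'$ has $Y^2$- or $Y^3$-coefficient equal to $\lambda_2$ (resp.\ a combination of $\lambda_2,\lambda_3$)'' to ``$\lambda_2$ or $\lambda_3$ has an antiderivative \emph{in $\bar k$}'': one has an identity in $\bar k(f)$ of the form $h'=\text{(local data)}$, and one must show that the existence of such $h\in\bar k(f)$ forces an element of $\bar k$ whose derivative is $\lambda_2$ (resp.\ $\lambda_3$). I expect this to go via the residue/partial-fraction analysis above together with the fact that $\bar k(f)=\bar k(t)$ (Riccati) or $\bar k(f)$ is abelian of genus one over $\bar k$ (Weierstrass): in the rational case one decomposes $h$ into partial fractions over $\bar k$ and the pole at $p$ contributes, through the derivation formula, exactly a $\bar k$-antiderivative of $\lambda_2$; in the elliptic case the absence of a holomorphic differential with a double pole immediately kills the Weierstrass possibility, or else the constant $\alpha$ in the Weierstrass equation $(t')^2=\alpha^2(4t^3-g_2t-g_3)$ ends up being an antiderivative-related quantity. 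Handling the Weierstrass case cleanly — in particular ruling it out via the double pole of $\omega$ at the smooth point $p$ with the given tangency, since holomorphic differentials on an elliptic curve are nowhere zero and have no poles — should be straightforward once the local expansion is in hand; the Riccati case is where the careful bookkeeping of $\lambda_2$ versus $\lambda_3$ (and the role of hypothesis (b) asking for \emph{both}) really matters, presumably because after matching $a_2=\lambda_2$-type data one still needs a second, independent relation to capture $\lambda_3$.
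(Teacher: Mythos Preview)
Your proposal has a genuine gap that undermines both the Riccati and the Weierstrass analyses: the formula $g'=\frac{dg}{dY}\cdot y'$ is wrong. The coefficients $c_i$ of the $Y$-adic expansion of $g$ lie in $\bar k$, which carries a nontrivial derivation, so the correct expansion is
\[
g'=\sum_i c_i'\,Y^i \;+\;\Bigl(\sum_i ic_iY^{i-1}\Bigr)(\lambda_2Y^2+\lambda_3Y^3+\cdots),
\]
and in particular the $Y^2$-coefficient of $g'$ is $c_2'+c_1\lambda_2$, not $c_1\lambda_2$. These $c_i'$ terms are not a nuisance to be suppressed; they are the entire mechanism. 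The paper's argument works by writing down this expansion for a hypothetical Riccati or Weierstrass solution $w=\sum_{i\ge r}a_iY^i$, equating low-order coefficients, and extracting relations of the shape $\bigl(-a_{r+1}/(ra_r)\bigr)'=\lambda_2$ or $\bigl(a_1/a_{-1}\bigr)'=\lambda_3$ \emph{inside $\bar k$}, directly contradicting (b). No ``descent'' from $\bar k(f)$ to $\bar k$ is needed, because the $a_i$ already live in $\bar k$.

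The second problem is structural. Your differential $\omega=dy/z$ encodes only the $\bar k$-linear part of the derivation on $\bar k(f)$, so the dictionary ``Riccati\,/\,Weierstrass type $\leftrightarrow$ $\omega$ exact\,/\,logarithmic\,/\,holomorphic'' from the autonomous classification does not transfer here. When $a_i\in\bar k\setminus C$, the equation $t'=a_2t^2+a_1t+a_0$ does \emph{not} say $\omega=dt/(a_2t^2+a_1t+a_0)$, because $t'\ne dt/\omega$ once the derivation is nontrivial on $\bar k$. Relatedly, your claim that Riccati type forces $\bar k(f)$ to have genus $0$ is false: the definition only asks that $\bar k(f)$ be \emph{finite algebraic over} some $\bar k(t)$, so the curve $\Gamma$ can have arbitrary genus, and there is no reason for $y$ itself to satisfy a Riccati equation. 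The paper accordingly never argues via $\omega$; it first proves by the expansion above (with a short case split on $\mathrm{ord}_p(w)$, the case $\mathrm{ord}_p(w)=-1$ being precisely where $\lambda_3$ is needed) that no $w\in\bar k(y,y')\setminus\bar k$ satisfies $w'=\alpha w+\beta$ with $\alpha,\beta\in\bar k$, and then reduces the full Riccati case and the Weierstrass case to this statement by further order considerations at $p$.
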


\begin{proof}
	Consider $\bar{k}(f)=\bar{k}(y,z)$  with the induced derivation  $': \bar{k}(y,z)\to \bar{k}(y,z);$ $y'=z.$  In order to show that $f(y,y')=0$ has  a nonalgebraic solution, we only need to show that the field of constants of $\bar{k}(y,y')$ is the same as that of $\bar{k}.$ Since $\bar{k}(y,y')$ embeds as a differential field in $\bar{k}((y)),$   for an element $w\in \bar{k}(y,y')\setminus \bar{k},$  we can write the $y-$adic expansion $ w=\sum_{i=r}^{\infty} a_i y^i,$ $a_i\in \bar{k}$ and $a_r\neq 0$ and obtain
\begin{align}
		w'&=a'_ry^r+a'_{r+1}y^{r+1}+\cdots+ra_ry^{r-1}(\lambda_2y^2+\lambda_3y^3+\cdots)+(r+1)a_{r+1}y^r(\lambda_2y^2+\cdots) +\cdots\notag \\ &= a'_ry^r+(a'_{r+1}+ra_r\lambda_2)y^{r+1}+(a'_{r+2}+ra_r\lambda_3+(r+1)a_{r+1}\lambda_2)y^{r+2}+\cdots. \label{zprimeexpression}
	\end{align} 

We shall first prove that 
\begin{itemize}
\item[(C2)]\label{nofirstordereqn}there is no $w\in \bar{k}(y,y')\setminus \bar{k}$ such that $w'=\alpha w+\beta$ for any $\alpha, \beta\in \bar{k}.$ \end{itemize} Suppose not. Then, substituting $w=\sum^\infty_{i=r}a_ry^r,$ we have the following cases to consider:

Case (i). ord$_{p}(w)=0$.   Here we have $a'_0=\alpha  a_0+\beta$ and if  $m\geq 1$ is the least positive integer such that $a_m\neq 0$ then we have $a'_m=\alpha a_m$ and that $a'_{m+1}+ma_m\lambda_2=\alpha a_{m+1}.$ Thus, we obtain $$\left(-\frac{a_{m+1}}{ma_m}\right)'=\lambda_2,$$ a contradiction to our assumption on $\lambda_2$.

Case (ii). ord$_{p}(w)\geq 1$ or ord$_p(w)\leq -2.$   Then $a'_r=\alpha a_r$ and that  $a'_{r+1}+ra_r\lambda_2=\alpha a_{r+1} .$ This implies $$\left(-\frac{a_{r+1}}{ra_r}\right)'=\lambda_2$$ and as before we obtain a contradiction.

Case (iii). ord$_p(w)=-1.$ Then  $a'_{-1}=\alpha a_{-1}$ and $a'_1=\alpha a_1+\lambda_3 a_{-1}.$ This implies $$\left(\frac{a_1}{a_{-1}}\right)'=\lambda_3,$$ which contradicts our assumption on $\lambda_3.$

Thus we have shown that there is no element $w\in \bar{k}(y,y')\setminus \bar{k}$ such that $w'=\alpha w+\beta$ for any $\alpha, \beta\in \bar{k}.$ In particular, we could choose $\alpha=\beta=0$ to show that the field of constants of $\bar{k}$ is the same as the field of constants of $\bar{k}(y,y').$ 

We shall now show that the equation $f(y,y')=0$ is of general type. Suppose  that there is an element $w\in \bar{k}(y,y')\setminus \bar{k}$ such that $w'=b_2w^2+b_1w+b_0$ for $b_2,b_1,b_0\in \bar{k}.$ Then  we must have $b_2\neq 0.$ If ord$_p(w)\leq -1$ then from Equation (\ref{zprimeexpression}) we obtain ord$_p(w')\geq$ ord$_p(w).$ But $b_2\neq 0$ and ord$_p(w)\leq -1$ implies ord$_p(w')=$ ord$_p(b_2w^2+b_1w+b_0)=2$ ord$_p(w).$ Thus we have obtained ord$_p(w)\leq$ ord$_p(w')=2$ ord$_p(w),$ a contradiction.  Next, if ord$_p(w) \geq 1$ then   $b_0=0$ and we obtain that $(1/w)'=-b_1(1/w)-b_2,$ which contradicts (C2).  Finally, if ord$_p(w)=0$ then  $a_0\in \bar{k}$ is a solution of the Riccati equation $a'_0=b_2 a^2_0+b_1a_0+b_0.$ A quick calculation shows  $$\left(\frac{1}{w-a_0}\right)'= \frac{-2b_2a_0-b_1}{w-a_0}-b_2$$ and this again contradicts (C2). Thus we have shown that there is no element $w\in \bar{k}(y,y')\setminus \bar{k}$ satisfying a Riccati equation over $\bar{k}.$ 

Now we suppose that there is a transcendental element $w\in \bar{k}(y,y')$ satisfying a Weierstrass differential equation:   $w'^2=\alpha^2(4w^3-g_2w-g_3)$ for $g_2,g_3\in C$ and $\alpha \in \bar{k}.$  Then from the $y-$adic expansion of $w$ and from Equation (\ref{zprimeexpression}), we have $w'^2=a'^2_ry^{2r}+\cdots$ and thus 
\begin{equation*}2\ \text{ord}_p(w)\leq \text{ord}_p(w'^2)= \text{ord}_p(\alpha^2(4w^3-g_2w-g_3)).\end{equation*}

If  ord$_p(w)< 0$ then ord$_p(\alpha^2(4w^3-g_2w-g_3))= 3\ \text{ord}_p(w)$ and we obtain  $2$ ord$_p(w)\leq 3$ ord$_p(w),$   a contradiction. If ord$_p(w)>0$ then ord$_p(\alpha^2(4w^3-g_2w-g_3))\leq$ ord$_p(w),$  and we obtain $2 \ \text{ord}_p(w)\leq$ ord$_p(w),$    again a contradiction.  Finally, let ord$_p(w)=0$ and in the $y-$adic expansion of $w,$ let $m$ be the least positive integer such that $a_m\neq 0.$ Then from Equation (\ref{zprimeexpression}),  we have the following equations
\begin{align} w'^2&=\alpha^2(4w^3-g_2w-g_3) \notag \\ a'^2_0&=\alpha^2(4a^3_0-g_2a_0-g_3)\notag\\ 2a'_0a_m'&=\alpha^2\left(12a^2_0a_m-g_2a_m\right)\label{coeff-y^m}. \end{align}
We assume for the moment that $a_0$ is a constant. Then $a_0$ must be one of the distinct roots of the polynomial $4Y^3-g_2Y-g_3$ and in particular, $a_0$ is not a root of $12Y^2-g_2.$ Now Equation (\ref{coeff-y^m}) becomes $$\alpha^2(12a^2_0-g_2)a_m=0,$$ which is absurd. Thus there is no element in $\bar{k}(t,t')\setminus \bar{k}$ satisfying a Weierstrass differential equation over $\bar{k}.$

Now we shall show that $a_0$ is indeed a constant. Assume otherwise and consider the nonsingular projective curve \begin{equation} \label{weierstrassianform}X^2_2X_0-4X^3_1+g_2X^2_0X_1+g_3X^3_0.\end{equation} We have two nonconstant points of this curve, namely, $(1:w:w'/\alpha)$ and $(1:a_0:a'_0/\alpha).$ If $(1:\eta:\xi):=(1:w:w'/\alpha)(1:a_0:a'_0/\alpha)$ then $$\eta=-w-a_0+\frac{1}{4\alpha^2}\left(\frac{w'-a'_0}{w-a_0}\right)^2$$ 
and thus $\eta\in \bar{k}(y,y')\setminus \bar{k}.$ Now we apply \cite[Lemma 2]{Kol53}  for the Weierstrass equations 
$w'^2=\alpha^2(4w^3-g_2w-g_3)$ and  $a'^2_0=(-\alpha)^2(4a^3_0-g_2a_0-g_3)$ and obtain that $\eta'=0.$ This contradicts the fact that field of  constants of $\bar{k}(y,y')$ is the same as the field of constants of $\bar{k}.$
\end{proof}

\begin{remark} \normalfont If we have an irreducible  affine curve of the form $Z-F_2-F_3-\cdots-F_n,$ where $F_i$ are forms of degree $i,$ then one can quickly find the coefficients $\lambda_2$ and $\lambda_3$ of the $Y-$adic expansion of $Z$ as follows:  Let $F_2=x_{20}Y^2+x_{11}ZY+x_{02}Z^2 $ and $F_3=x_{30}Y^3+R_3.$ Then, on the curve, the value of  $Z$ equals  \begin{equation*} x_{20}Y^2+x_{11}ZY+x_{02}Z^2+x_{30}Y^3+R_3+F_4+\cdots+F_n.\\
	\end{equation*}
In the above expression we shall substitute back for $Z$  and obtain
\begin{equation*} 
x_{20}Y^2+x_{11}\left(x_{20}Y^2+x_{11}ZY+\cdots\right)Y+x_{02}\left(x_{20}Y^2+x_{11}ZY+\cdots\right)^2+x_{30}Y^3+\cdots.\\
\end{equation*}
Continuing this process one actually obtains the $Y-$adic expansion of $Z;$  $$Z=x_{20}Y^2+\left(x_{11}x_{20}+x_{30}\right)Y^3+\cdots.$$
	\end{remark}

\begin{example}Let $\C(x)$ be the ordinary differential field of rational functions with the derivation $x'=1.$  Then differential equation $$y'-\frac{1}{x}y^2-xyy'-\frac{1}{x+1}y^3+y(y')^2=0$$ is of general type as $1/x$ and $x(1/x)+1/(x+1)=1+1/(x+1)$ have no antiderivatives in $\C(x).$
	\end{example}

\begin{example}\label{Abel-DE} The differential equation \begin{equation}\label{genabel}y'=a_ny^n+\cdots+a_3y^3+a_2y^2, \end{equation} 
	where both $a_2$ and $a_3$ having no antiderivatives in $k,$ is  readily seen to be of general type.
Thus, the differential equation $y'=y^3-y^2$  is of general type over $\C.$   In fact, any autonomous equation of the kind (\ref{genabel}) with $a_2$ and $a_3$ nonzero is of general type.
\end{example}

\begin{remark}\label{rational-autoclass}   \normalfont Let $C$ be an algebraically closed field with the zero derivation. From Theorem \ref{intromaintheorem} and from \cite[Proposition 3.1]{Sri17}, we have the following equivalent statements: 
	\begin{enumerate}[(i)]\label{autonomous-equivalence}
	\item An autonomous differential equation over $C$ of the form $y'=f(y);\ f\neq 0$ is not of general type, that is, the equation has a nonalgebraic solution in an iterated strongly normal extension of $C.$\\
	\item The differential field $C(y)$ has an element $z$ such that either $z'=1$ or $z'=cz$ for some nonzero constant $c.$ \\
	\item There is an element $z\in C(y)$ and a nonzero constant $c\in C$ such that $$\frac{1}{f(y)}=\frac{\partial z}{\partial y}\quad \text{or}\quad \frac{1}{f(y)}=\frac{1}{cz}\frac{\partial z}{\partial y}.$$ 
	\end{enumerate}
	Thus the autonomous equation $y'=f(y)$ is not of general type if and only if  either $1/f(y)$ has no residues at any element of $C$, that is, the partial fraction expansion of $1/f(y)$ is of the form \begin{equation} \label{auto-antideriv} h(y)+\sum_{i=1}^{n} \sum_{j=2}^{n_i}\frac{d_{i j}}{(y-c_i)^j},\end{equation}  or the partial fraction expansion of $1/f(y)$ is of the form \begin{equation}\label{auto-exponential}c\sum_{i=1}^{n} \frac{m_i}{(y-c_i)},\end{equation} where $m_i$ are nonzero integers and $c$ is a nonzero constant.
	
	The equations $y'=y^3-y^2$ and $y'=\dfrac{y}{y+1}$ are now easily seen to be of general type.
	\end{remark}

\subsection{Differential equations of nongeneral type.} \label{algebraicdependenceofsolutions} In this section we shall study the algebraic dependence of first order differential equations of nongeneral type.  Let $L$ be a differential field extension of $k$ having $C$ as its field of constants.  It is easily seen that any two nonzero solutions in $L$ of a homogeneous first order linear differential equation $y'=by,$ where $0\neq b\in k,$ are $C-$linearly dependent. Suppose that $L$ has  nonalgebraic solutions of the nonhomogeneous  differential equation $y'=by+c$ for $b,c\in k$ and $c\neq 0.$ Consider a Picard-Vessiot extension $E$  of $L$ for $M=k[\partial]/k[\partial]\mathscr L,$ where $\mathscr L=\partial^2-(b+(c'/c))\partial+b(c'/c)-b'$ is obtained by  homogenizing $y'=by+c.$  Let $V\subset E$ be the set of all solutions of $\mathscr L(y)=0.$ Then dim$_CV=2.$  The differential field $\mathcal E:=k\langle V\rangle,$  generated by elements of $V$ and all its derivatives, is a Picard-Vessiot extension of $k$ (for $M$).  For any $u\in L$ such that  $u'=bu+c,$ we see that $\mathscr L(u)=0$ and therefore $u\in V\subset \mathcal E.$  Moreover, for any automorphism $\s\in \gal(\mathcal E | k),$ we have $\mathscr L(\s(u))=\s\left(\mathscr L(u)\right)=0$ and that $(\s(u)-u)'=b(\s(u)-u).$ Thus, if we choose an automorphism $\s\in \gal(\mathcal E| k)$ such that $\s(u)\neq u$ then since $\mathscr L(\s(u)-u)=\mathscr L (\s(u))-\mathscr L(u)=0,$ we have $\{\s(u)-u, u\}$  to be a $C-$basis of $V.$ We therefore have $\mathcal E=k(\s(u), u),$ where the fields $k(\s(u)-u)$ and $k(u)$ are differential fields. This implies $\mathrm{tr.deg}(\mathcal E|k)\leq 2$ and thus  any three  solutions in $L$ of $y'=by+c,$ where $b,c\in k$ and $c\neq 0,$ must be $k-$algebraically dependent.

Suppose that  \begin{equation}\label{alg-indp-Riccati} y'=ay^2+by+c \ \text{for} \ a,b,c\in k \ \text{with}\ a\neq 0 \end{equation} has a  nonalgebraic solution  in $L.$  Let $$\mathscr L= \partial^2-\left(\frac{a'}{a}+b\right)\partial+ac$$ and $E$ be a Picard-Vessiot extension of $L$ for $M=k[\partial]/k[\partial]\mathscr L.$  Let $V\subset E$ be the set of all solutions of  $\mathscr L(y)=0.$ Then  $v\in V\setminus \{0\}$ if and only if $-v'/av$ is a solution of the Riccati equation (\ref{alg-indp-Riccati}). Note that $\mathcal E:=k\langle V\rangle$ is a Picard-Vessiot extension of $k$ for $M.$ We first claim that every solution in $L$ of (\ref{alg-indp-Riccati}) belongs to $\mathcal E.$

Let $u\in L$ be a solution of the Riccati equation (\ref{alg-indp-Riccati}). Let  $L^*$ be a Picard-Vessiot extension of $E$ for  the differential equation $y'=-auy.$ Then $L^*=E(x)$ for some nonzero $x$ such that $x'=-aux.$ Note that $V\subset \mathcal E\subseteq L^*$ and that  $V$ is a two dimensional vector space over $C$. Since $C$ is the field of constants of $L^*$ and  $\mathscr L (x)=0,$ we must have $x\in V\subset \mathcal E.$ Consequently, $u=-x'/(ax)\in \mathcal E.$  

Let $\Omega\subset \mathcal E$ be set of all nonalgebraic solutions of Equation (\ref{alg-indp-Riccati}).  Now we shall show that any four distinct elements of $\Omega$ must be $k-$algebraically dependent. That is, $\mathrm{tr.deg}(k(\Omega)|k)\leq 3.$

The differential Galois group $\gal(\mathcal E|k)$ is a closed subgroup of the algebraic group $\mathrm{GL}(V).$ Since dim $\mathrm{GL}(V)=4,$  if $\gal(\mathcal E|k)$ is a proper closed subgroup  then  dim $\gal(\mathcal E|k)\leq 3$ and we obtain $\mathrm{tr.deg}(k(\Omega)|k)\leq$ $\mathrm{tr.deg}(\mathcal E|k)=$ dim $\gal(\mathcal E|k)\leq 3.$   Thus we only need to deal with the case when $\gal(\mathcal E| k)=\mathrm{GL}(V).$ In this case, we first choose a $C-$basis $\{y_1,y_2\}$ of $V$ and identify $\gal(\mathcal E|k)$ with $\mathrm{GL}(2,C).$ Next, we consider the differential field $K=\mathcal E^\mathcal Z,$  where $\mathcal Z$ is  the center  of $\mathrm{GL}(2,C).$ Then for any nonzero solution $v=c_1y_1+c_2y_2\in V$ and any automorphism  $\tau\in \mathcal Z,$ we have $\tau (v)=c_1c_\tau y_1+c_2 c_\tau y_2.$ Thus $\tau (v)=c_\tau v$  and we obtain $v'/v\in K.$ This implies  $\Omega\subset K.$ Since $\mathcal Z$ is normal, $K$ is a Picard-Vessiot extension of $k$ with Galois group $$\gal(K|k)\cong \gal(\mathcal E|k)/ \mathcal Z\cong\  \mathrm{PGL}(2,C).$$
 Now $\mathrm{tr.deg}(K|k)=$ dim $\mathrm{PGL}(2,C)=3$ implies  any four distinct elements in $\Omega$ must be algebraically dependent over $k$. We would like to remark that the Riccati equation $y'=-y^2+x$ over the differential field $\C(x)$ with the derivation $d/dx$ has exactly three $\C(x)-$algebraically independent solutions in any Picard-Vessiot extension of $\C(x)$ (see \cite[Example 4.29]{Mag94} or \cite{Nag-20}). 

Suppose that $L$ has  a nonalgebraic solution $u$ satisfying a Weierstrass differential equation;  $u'^2=\alpha^2(4u^3-g_2u-g_3).$ Then since $L$ has $C$ as its field of constants,  from \cite[Lemma 2]{Kol53},  any other nonalgebraic solution of the Weierstrass equation must belong to the field $k(u,u').$ In fact, the points $(1:z:z'/\alpha),$ $(1:y:y'/\alpha)$ of the elliptic curve (\ref{weierstrassianform}) has the following relation; $(1:z:z'/\alpha)=(1:y:y'/\alpha)(1:c_1:c_2),$ where $(1:c_1:c_2)$ is a $C-$point of the curve. With all these facts, we shall now move on to prove the following

\begin{theorem}\label{algebraicdependence-generaltype} Let $L$ be a differential field extension of $k$ having $C$ as its field of constants. 
			
		\begin{enumerate} [(i)] 
		\item If an autonomous differential equation over $C$ is  not of general type then there is at most one $C-$algebraically independent solution of the equation in $L.$\\
		
			\item  If a first order differential equation over $k$ is not of general type then there are at most three $k-$algebraically independent solutions of the equation in $L.$  
		\end{enumerate}

	\end{theorem}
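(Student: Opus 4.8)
The plan is to reduce the theorem to the classification and to the concrete estimates for Riccati, linear and Weierstrass equations already obtained in this section, by transporting each nonalgebraic solution to a solution of the distinguished equation. By definition an equation not of general type is of algebraic, Riccati, or Weierstrass type; if it is of algebraic type it has no nonalgebraic solution, so its solutions in $L$ span a field of transcendence degree $0$ over $k$, and since an algebraic solution may always be discarded it suffices in both (i) and (ii) to bound $\mathrm{tr.deg}(k(\Omega)\mid k)$, where $\Omega$ denotes the set of nonalgebraic solutions of $f(y,y')=0$ in $L$. So assume $f(y,y')=0$ is of Riccati or Weierstrass type; for (i) we invoke Theorem~\ref{intromaintheorem}(ii) to get $t\in C(f)$ with $t'=1$, with $t'=ct$ for some $c\in C^{\times}$, or with $(t')^{2}=\alpha^{2}(4t^{3}-g_{2}t-g_{3})$ for $g_{2},g_{3},\alpha\in C$. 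Fix an algebraic closure $\widehat L$ of $L$; it still has $C$ as its field of constants. For $u\in\Omega$ the rule $y\mapsto u,\ z\mapsto u'$ defines a differential $k$-homomorphism $k[Y,Z]/\langle f\rangle\to L$ whose kernel is a differential prime ideal meeting $k[Y]$ trivially (as $u$ is transcendental over $k$), hence zero for dimension reasons, so we obtain a differential embedding $\phi_{u}\colon k(f)\hookrightarrow\widehat L$. Let $\widetilde k$ and $t\in\widetilde k(f)$ be as in the definition of Riccati (resp.\ Weierstrass) type; enlarging $\widetilde k$ we may take it finite Galois over $k$ (the Riccati/Weierstrass data are inherited by finite enlargements of $\widetilde k$), and for (i) no enlargement is needed since $\widetilde k=C$. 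Extend each $\phi_{u}$ to an embedding $\widetilde k(f)\hookrightarrow\widehat L$, arranged — the delicate step, discussed below — so that all of them restrict to one fixed embedding $\iota\colon\widetilde k\hookrightarrow\widehat L$, which we use to identify $\widetilde k$ with a subfield of $\widehat L$. Then $t_{u}:=\phi_{u}(t)$ is transcendental over $\widetilde k$, satisfies the \emph{same} equation $t_{u}'=a_{2}t_{u}^{2}+a_{1}t_{u}+a_{0}$ (resp.\ $(t_{u}')^{2}=\alpha^{2}(4t_{u}^{3}-g_{2}t_{u}-g_{3})$) over $\widetilde k$ for every $u\in\Omega$, and $u$ is algebraic over $\widetilde k(t_{u})$ (resp.\ over $\widetilde k(t_{u},t_{u}')$). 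Consequently $\widetilde k(\Omega)$ is algebraic over $\widetilde k(\{t_{u}:u\in\Omega\})$, and since $[\widetilde k:k]<\infty$ it suffices to bound $\mathrm{tr.deg}(\widetilde k(\{t_{u}\})\mid\widetilde k)$.

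In the Riccati case this is exactly the content of the discussion around \eqref{alg-indp-Riccati}: when $a_{2}\neq0$ one homogenises the Riccati equation, passes to its Picard--Vessiot field $\mathcal E=\widetilde k\langle V\rangle$ over $\widetilde k$ with $\dim_{C}V=2$, notes that $\mathscr G(\mathcal E\mid\widetilde k)$ is a closed subgroup of $\mathrm{GL}(V)$, and — in the generic case $\mathscr G(\mathcal E\mid\widetilde k)=\mathrm{GL}(V)$, after replacing $\mathcal E$ by the subfield fixed by the centre — finds every element of $\Omega$ inside a Picard--Vessiot extension of $\widetilde k$ of transcendence degree at most $\dim\mathrm{PGL}(2,C)=3$; when $a_{2}=0$ the linear estimate at the start of this section gives the sharper bound $\le2$ (and $\le1$ if also $a_{1}=0$). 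In all cases $\mathrm{tr.deg}(k(\Omega)\mid k)\le3$, which is (ii) in this case.

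In the Weierstrass case, Kolchin's addition lemma \cite[Lemma~2]{Kol53}, applied just as above, shows that over any differential field with field of constants $C$ every nonalgebraic solution of $(t')^{2}=\alpha^{2}(4t^{3}-g_{2}t-g_{3})$ lies in the field generated by one fixed such solution together with its derivative; hence all the $t_{u}$ lie in a single $\widetilde k(t_{0},t_{0}')$, a field of transcendence degree $1$ over $\widetilde k$, so $\mathrm{tr.deg}(k(\Omega)\mid k)\le1\le3$, completing (ii). Part (i) is the case $k=\widetilde k=C$, where $\phi_{u}$ is canonical and the delicate step above is vacuous: if $t'=1$ or $t'=ct$ with $c\in C^{\times}$, then any two transported solutions $t_{u},t_{u'}$ differ by an additive (resp.\ multiplicative) constant of $L$, so all the $t_{u}$ lie in a single $C(t_{0})$, of transcendence degree $1$ over $C$, while the Weierstrass subcase is as above. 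In every case $\Omega$ spans over $C$ a field of transcendence degree at most $1$, which is (i).

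\textbf{Main obstacle.} The one genuinely delicate point is the arrangement, in the first paragraph, that the extensions of the various $\phi_{u}$ to $\widetilde k(f)$ all restrict to the \emph{same} embedding of $\widetilde k$. Without this one only knows that the transported equations are $k$-conjugate to one another, and then the crucial step ``all solutions sit inside a single Picard--Vessiot extension, resp.\ on a single elliptic curve, over $\widetilde k$'' is not available. Carrying it out requires keeping track of the field of definition of $f$ — the relative algebraic closure $k_{0}$ of $k$ in $k(f)$, which $\widetilde k$ may be assumed to contain — passing to the Galois closures of the relevant finite extensions, and using the freedom of composing an extension of a given $\phi_{u}$ with a suitable element of $\mathrm{Gal}(\widetilde k(f)/k(f))$; one then checks that a single $\iota$ can be chosen to work simultaneously for the finitely many solutions being compared. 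The remaining points — that $\phi_{u}$ is a differential embedding when $u$ is nonalgebraic, and that $\widehat L$ retains $C$ as its constants — are routine.
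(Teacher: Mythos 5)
Your argument is essentially the paper's own proof: you transport each nonalgebraic solution $u_i$ by a differential embedding of $k(f)\cong k(u,u')$ into (an algebraic closure of) $L$ and then invoke exactly the bounds established just before the theorem — the $\mathrm{GL}_2/\mathrm{PGL}_2$ computation for a genuine Riccati equation, the linear estimates when $a_2=0$, Kolchin's addition lemma in the Weierstrass case, and Corollary \ref{SNEoverC} in the autonomous case. The ``delicate step'' you isolate (arranging that all extensions of the $\phi_u$ restrict to one fixed embedding of $\tilde k$, i.e.\ that the transported elements satisfy one and the same Riccati/Weierstrass equation) is precisely what the paper assumes without comment when, after replacing $k$ by a finite algebraic extension, it speaks of ``the natural differential embeddings $\psi_i$''; so your write-up follows the same route and, if anything, is more explicit than the published argument about that point.
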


\begin{proof}
	
	Let $f(y,y')=0$ be a differential equation of nongeneral type. We only need to consider the case when the differential equation has a  nonalgebraic solution $u$ in an iterated strongly normal extension of $k.$ Let $u_1, u_2, u_3, u_4$ be  distinct nonalgebraic solutions from $L.$  Replace $k,$ if necessary, by a finite algebraic extension of $k$ so that $k(u,u')$ contains a nonalgebraic solution of a Riccati or a Weierstrass differential equation over $k.$ Now for $i=1,2,3,4,$ consider the natural  differential embeddings $$\psi_i: k(u, u')\to L; \ \ \psi_i(u)= u_i,  \psi_i(u')= u'_i.$$  If  there is a nonalgebraic solution $t\in k(u,u')\setminus k$ of a Riccati  differential equation over $k,$ say $t'=a_2t^2+a_1t+a_0,$ then  $\psi_i(t)'=a_2\psi_i(t)^2+a_1\psi_i(t)+a_0$ for each $i=1,2,3,4.$ Then as noted earlier, $\mathrm{tr.deg}\left(k(\psi_1(t),\dots,\psi_4(t))|k\right)\leq 3.$ Since $\psi_i(t)\in k(u_i),$ each $u_i$ is  algebraic over $k(\psi_i(t))\subseteq k(\psi_1(t),\dots,\psi_4(t)).$ This implies $u_1,\dots, u_4$ are $k-$algebraically dependent.
	
	Similarly, if $t\in k(u,u')\setminus k$ is a nonalgebraic solution of a Weierstrass differential equation then we have  $t'^2=\alpha^2(4t^3-g_2t^2-g_3)$ and for $i=1,2,$ we have  $(\psi_i(t))'^2=\alpha^2(4(\psi_i(t))^3-g_2\psi_i(t)-g_3).$  As noted earlier, we then have $$(1:\psi_1(t):\psi_1(t)'/\alpha)=(1:\psi_2(t):\psi_2(t)'/\alpha)(1:c_1:c_2), \ \ \text{where} \ c_1,c_2\in C.$$
	That is, 	$\psi_1(k(t,t'))=\psi_2(k(t,t'))$ and we obtain that $u_1$ and $u_2$ are algebraically dependent over $k.$ Thus, in this case,  any two nonalgebraic solutions of $f(y,y')=0$ are $k-$algebraically dependent.

	Now we shall consider the special case when  $f(y,y')=0$ is an autonomous differential equation  having a nonalgebraic solution $u$ in an iterated strongly normal extension of $C.$ In this case, we know from Corollary \ref{SNEoverC} that there is an element $t\in C(u,u')$ such that $t'=1$ or $t'=ct$ for some nonzero constant $c.$ Then, $\psi_i(t)'=1$ or $\psi_i(t)'=c\psi_i(t).$ Thus $\psi_1(t)=\psi_2(t)+e$ for some $e\in C$ or $\psi_1(t)=e\psi_2(t)$ for some nonzero $e\in C.$ This shows that  $\psi_1(C(t))=\psi_2(C(t))$ and we obtain that $u_1$ and $u_2$ are algebraically dependent over $C.$   	\end{proof}

	\subsection{Rational autonomous differential equations} \label{rationalautoconjecture}  We shall now verify  Conjecture \ref{conjecture}
	 for the class of  rational autonomous differential equations $y'=h(y),$ where   $h$ is a nonzero   rational function over $C.$ Let $f(y,y')=y'-h(y).$ We first make the following observations.
	\begin{enumerate}[(a)] \item The differential field $C(f)=C(y)$ with $y'=h(y),$  has $C$ as its field of  constants. This is easily seen by noting that if $u'=0$ for $u\in C(y)\setminus C$ then $y$ is algebraic over the differential field $C(u),$ which contains only constants. An easy calculation using the monic irreducible polynomial of $y$ over $C(u)$ shows that $y'=0.$  This implies $h=0$, a contradiction.  Thus every rational autonomous differential equation admits a nonalgebraic solution.\\
		
		\item If $h$ has no zero in $C$ then $1/h$ is a polynomial over $C$ and has the form 
		(\ref{auto-antideriv}) described in Remark \ref{rational-autoclass}. Thus if $C(y)$ is a transcendental extension of $C$ with $y'=h(y)$ then $C(y)$ has a nonzero element $z$ such that $z'=1.$  This shows that  rational autonomous equations; $y'=h(y)$ with $h$ having no zeros in $C$, are not of general type.\end{enumerate}
		
Suppose that $\alpha\in C$ is a zero of $h.$ Consider the rational function $g\in C(y)$ defined by $h(y):=g(y-\alpha).$ Then, we have $$(y-\alpha)'=y'=h(y)=g(y-\alpha).$$
Thus, given a nonzero rational autonomous differential equation $y'=h(y)$ with $h$ having a zero in $C,$ it does no harm to assume that $h$ has a zero at $y=0.$ In view of Theorem \ref{algebraicdependence-generaltype}  the conjecture is verified once we prove the following:  If an autonomous differential equation \begin{equation*} y'=h(y),\quad h\neq 0 \quad \text{and} \quad h(0)=0\end{equation*} has at most one nonalgebraic solution in any given differential field extension $L$ of $C$ having $C$ as its field of constants then the equation is not of general type.

Consider the purely transcendental differential field extension $C(t,y)$ of $C,$ where   $t'=h(t)$ and $y'=h(y).$  From our hypothesis, there is an element $ u \in C(t,y) \setminus C(t)$ such that $u'=0$.  Then we have following equations  \begin{equation*}
\notag y'=h(y)=\sum^\infty_{i=m}c_i y^i,\qquad u= \sum^\infty_{i=p}b_i y^i,\end{equation*}
where $m\geq 1,$ $c_i\in C$ for all $i\geq m$ and $c_m\neq 0,$ $p$ is an integer and $b_i\in \overline{C(t)}$ for all $i\geq p$ and $b_p\neq 0.$
Taking derivatives, we obtain 
\begin{equation*}0=u'=\sum^\infty_{i=p}b'_i y^i +\left(\sum^\infty_{i=m}c_i y^i\right)\left(\sum^\infty_{i=p}ib_iy^{i-1}\right).
\end{equation*}

We observe that

\begin{enumerate}[(i)]
		\item\label{p0m2} If $p=0$ and $m\geq 2$ then letting $l$ be the least positive integer such that $b_l\neq 0,$  we get $b_l'=0$ and that $b'_{l+m-1}=-l c_m b_l \in C\setminus \{0\} $. \\
	\item \label{p1m2} If $p\neq 0$ and $m\geq 2$ then $b'_p=0$ and $b'_{p+m-1}=-pb_pc_m\in C\setminus\{0\}.$\\
		\item \label{p0m1} If $p=0$ and $m=1$ then letting $l$ be the least positive integer such that $b_l\neq 0, $ we get $b'_l/b_l =-lc_1\in C\setminus\left\lbrace 0\right\rbrace $. \\
\item \label{p1m1}  If $p\neq 0$ and $m=1$ then $b'_p/b_p=-pc_1\in C\setminus \{0\}.$
 \end{enumerate}

Thus, in the event that  (\ref{p0m2}) or (\ref{p1m2}) holds, we obtain an element $z\in C(t)$ such that $z'=1$ and in the event that (\ref{p0m1}) or (\ref{p1m1}) holds, we obtain an element $z\in C(t)\setminus \{0\}$ such that $z'=cz$ for some $c\in C\setminus \{0\}.$ Now from Remark \ref{rational-autoclass}, the equation $y'=h(y)$ is not of general type. Thus, we have verified the  conjecture for the class of rational autonomous differential equations.


\bibliography{KRS}
\bibliographystyle{abbrv}

\end{document}